\newcommand*\patchAmsMathEnvironmentForLineno[1]{%
  \expandafter\let\csname old#1\expandafter\endcsname\csname #1\endcsname
  \expandafter\let\csname oldend#1\expandafter\endcsname\csname end#1\endcsname
  \renewenvironment{#1}%
     {\linenomath\csname old#1\endcsname}%
     {\csname oldend#1\endcsname\endlinenomath}}%
\newcommand*\patchBothAmsMathEnvironmentsForLineno[1]{%
  \patchAmsMathEnvironmentForLineno{#1}%
  \patchAmsMathEnvironmentForLineno{#1*}}%
\def\etad{\eta_*}
\def\tf{t_{\rm f}}
\def\vhat{\hat \bv}
\def\vhata{\vhat(\bsalpha)}
\def\vhatA{\vhat(\cA)}
\def\vtilde{\widetilde{\bv}}
\def\etad{\eta_{\ast}}
\def\hd{h_{\ast}}
\def\jd{j_{\ast}}
\def\hg{h_\gamma}
\def\hgv{\hg(\bv)}
\def\hgd{(h_\gamma)_\ast}
\def\jg{j_\gamma}
\def\vhatg{\vhat_\gamma}
\def\vhatga{\vhatg(\bsalpha)}
\def\vhatgA{\vhatg(\cA)}
\def\alphahat{\hat{\bsalpha}}
\def\alphahatg{\alphahat_\gamma}
\def\alphahatu{\alphahat(\bu)}
\def\alphahatv{\alphahat(\bv)}
\def\alphahatgu{\alphahatg(\bu)}
\def\alphahatgv{\alphahatg(\bv)}
\def\alphabar{\overline{\bsalpha}}
\def\Ga{G_{\bsalpha}}
\def\Gu{G_{\alphahatu}}
\def\Gv{G_{\alphahatv}}
\def\Ggu{G_{\alphahatgu}}
\def\Ggv{G_{\alphahatgv}}
\begin{document}

\title{A regularized entropy-based moment method for kinetic equations%
\footnote{
This manuscript has been authored by UT-Battelle, LLC under Contract No. 
DE-AC05-00OR22725 with the U.S. Department of Energy. The United States 
Government retains and the publisher, by accepting the article for publication, 
acknowledges that the United States Government retains a non-exclusive, paid-up, 
irrevocable, world-wide license to publish or reproduce the published form of 
this manuscript, or allow others to do so, for United States Government 
purposes. The Department of Energy will provide public access to these results 
of federally sponsored research in accordance with the DOE Public Access Plan 
(\texttt{http://energy.gov/downloads/doe-public-access-plan}).
}
}

\date{\today}
\author{Graham W. Alldredge 
\thanks{Department of Mathematics and Computer Science,
Freie Universit\"at Berlin,
14195 Berlin, Germany,
(\texttt{graham.alldredge@fu-berlin.de})
}
\and
Martin Frank%
\thanks{Department of Mathematics,
Karlsruhe Institute of Technology,
D-76128 Karlsruhe, Germany,
(\texttt{martin.frank@kit.edu}).}
\and
Cory D. Hauck%
\thanks{Computational Mathematics Group,
Computer Science and Mathematics Division,
Oak Ridge National Laboratory,
Oak Ridge, TN 37831, USA, 
(\texttt{hauckc@ornl.gov}).}
}

\maketitle

\section{Introduction}

Kinetic equations model systems consisting of a large number of particles that
interact with each other or with a background medium.
They arise in a wide 
variety of applications, including
rarefied gas dynamics \cite{Cercignani},
neutron transport \cite{Lewis-Miller-1984},
radiative transport \cite{mihalas1999foundations},
and semiconductors \cite{markowich1990}.
For charge-neutral particles, these equations evolve the \textit{kinetic density 
function}
$f \colon [0, \infty) \times X \times V \to [0, \infty)$ according to
\begin{equation}\label{eq:kinetic}
 \partial_t f(t, x, v) + v \cdot \nabla_x f(t, x, v) = \cC(f(t, x, \cdot))(v).
\end{equation}
The function $f$ depends on time $t \in [0, \infty)$, position
$x \in X \subseteq \R^d$, and a velocity variable $v \in V \subseteq \R^d$.
The operator $\cC$ introduces the effects of particle collisions; at each $x$ 
and $t$, it 
is an integral operator in $v$.
In order to be well-posed, \eqref{eq:kinetic} must be accompanied by appropriate
initial and boundary conditions.

In this work, we present a new entropy-based moment method for the velocity 
discretization of 
\eqref{eq:kinetic}.   The method relies on a regularization of the optimization 
problem that defines
the closure in the moment equations.
The key advantage of our approach is that, unlike the standard entropy-based 
method,
the solution of the moment equations in the regularized setting is not required 
to take on 
realizable values. 
Roughly speaking, a vector is said to be realizable if it is the velocity 
moment 
of a scalar-valued kinetic density function that takes values in a prescribed 
range.
Typically this range is the set of nonnegative values, but in some cases, an 
upper bound is also enforced.
In practical applications, it is advantageous to remove the requirement of 
realizability because it has 
proven to be difficult to design numerical methods, particularly high-order 
ones, that can maintain it.


Before introducing the regularized method, in \secref{background} we provide 
the necessary background on moment methods, particularly with the 
entropy-based approach.
In \secref{structure}, we introduce the new method and show that it retains 
many, though not all, of the attractive structural properties of the original 
approach.
We then show in \secref{acc} that the new method can be used to generate 
accurate numerical simulations of standard entropy-based moment equations, 
thereby bypassing the need to design a realizable solver for them.
In \secref{numerics}, we demonstrate the accuracy of such simulations using 
the method of manufactured solutions and a benchmark problem.

\section{Background}
\label{sec:background}

In this section, we briefly review the formalism for entropy-based moment
methods.  
The key topics are: structural properties of the kinetic equations, the general
moment approach, the entropy-based closure, and the issue of realizability.
Throughout the discussion and for the remainder of the paper, we rely on 
bracket 
notation for velocity integration: for any $g \in L^1(V)$,
\begin{align}
\Vint{g} := \int_V g(v) \intdv.
\end{align}

\subsection{Structure of the kinetic equation}


The structure of the kinetic equation \eqref{eq:kinetic} plays a definitive 
role in the design of moment methods (and numerical methods in general).
This structure is induced by properties of the collision operator $\cC$ and the 
advection operator $\cA = \partial_t + v \cdot \nabla_x$.
We highlight the basic structural elements below, which are satisfied in many
situations.

\begin{enumerate}[(i)]
	\item \emph{Invariant range}:  There exists a set $B \subseteq [0, \infty)$, 
	consistent with the physical bounds on $f$, such that
	$\range(f(t,\cdot,\cdot)) \subseteq B$ whenever
	$\range(f(0,\cdot,\cdot)) \subseteq B$.
	In general, $f$ is expected to be nonnegative becaues it is a density; for 
	particles satisfying Fermi-Dirac statistics, it should also be bounded from 
	above.

	\item \emph{Conservation}: There exist functions $\phi \colon V \to \R$,
	called \textit{collision invariants}, such that
	\begin{align}\label{eq:invariants}
	\Vint{\phi\, \cC(g)} = 0, \qquad \text{for all } g \in \text{Dom}(\cC).
	\end{align}
	We denote the linear span of all collision invariants by $\bbE$.
%
	When combined with the kinetic equation, \eqref{eq:invariants} 
	implies local conservation laws of the form:
	\begin{align}\label{eq:conservation}
	\partial_t \Vint{\phi f} + \nabla_x \cdot \Vint{v \phi f} = 0.
	\end{align}
	\item \emph{Hyperbolicity}:  For each fixed $v$, the advection operator 
	is hyperbolic over  $(t,x) \in [0, \infty) \times X$.
	\item \emph{Entropy dissipation}:
	Let $D \subseteq \R$.
	There exists a twice continuously differentiable, strictly convex 
	function $\eta \colon D \to \R$, called the \textit{kinetic entropy density}, 
	such that
	\begin{align}\label{eq:entropy-diss}
		\Vint{\eta'(g) \cC(g)} \le 0 \qquad \text{for all }
		g\in \text{Dom}(\cC) \text{ such that } \text{Range}(g) \subseteq D.
	\end{align}
  Combined with the kinetic equation, \eqref{eq:entropy-diss} 
  implies the local entropy dissipation law
  \begin{align}\label{eq:entropy-diss-kin-eq}
   \partial_t \Vint{\eta(f)} + \nabla_x \cdot \Vint{v \eta(f)} \le 0.
  \end{align}
  Often $D$ is consistent with physical bounds on the range of 
  $f$, i.e., $B = D$.
  (See Table \ref{tab:ex-ent} below.)
	\item \emph{H-Theorem}:  Equilibria are characterized by any of the three 
	equivalent statements:
  \begin{align}
		{\rm (a)}~\Vint{\eta'(g) \cC(g)} = 0;
		\qquad
		{\rm (b)}~\cC(g) = 0; 
		\qquad
		{\rm (c)}~\eta'(g) &\in \bbE.
	\end{align}
	\item \emph{Galilean invariance}:
	There exist Galilean transformations $\cG_{O,w}$ defined by
	\begin{align}\label{eq:Galilean}
	{(\cG_{O,w} g)(t, x, v) := g(t, O(x - t w), O(v - w))},
	\end{align}
	where $O \in \operatorname{SO}(d)$ is a $d \times d$ rotation matrix and
	$w \in V$ is a translation in velocity,
	that commute with the advection and collision operators. i.e.,
  \begin{alignat}{3}\label{eq:invariant}
   \cA(\cG_{O,w} g) &= \cG_{O,w} \cA(g) \quad && \text{for all }  g \in 
    \operatorname{Dom}(\cA) \\
   \cC(\cG_{O,w} g) &= \cG_{O,w} \cC(g) \quad && \text{for all }  g \in 
    \operatorname{Dom}(\cC).
  \end{alignat}
  As a consequence, the transformed particle 
  density $\cG_{O,w} f$ also satisfies the kinetic equation \eqref{eq:kinetic}.
 
\end{enumerate}

\begin{table}
	\renewcommand{\arraystretch}{2}
	\centering
	\small
	\begin{tabular}{c|c|c|c|c|c}
		Entropy type 			& $\eta(z)$ 							& $\dom(\eta)$ 		& $\eta'(z)$ 	
& $\etad(y)$ 	& $\etad'(y)$ \\ \hline
		Maxwell--Boltzmann 	& $z \log(z) - z$	 					& $[0, \infty)$ 	& 
$\log(z)$		& $e^y$			& $e^y$ \\
		{Bose--Einstein}		& $(1 + z) \log (1 + z) - z \log(z)$ 	& $[0, \infty)$		
& $\log \left(\dfrac{z}{1+z}\right)$	& 	$-\log(1-e^y)$		& $\dfrac{1}{e^y - 
1}$ \\
		{Fermi--Dirac}			& $(1 - z) \log (1 - z) + z \log(z)$ 	& $[0, 1]$			& 
$\log \left(\dfrac{z}{1-z}\right)$	& 	$\log(1+e^y)$		& $\dfrac{1}{e^y + 1}$ 
\\
		{Quadratic} 			& $\frac12 z^2$ 						& $\bbR$			& 	$z$			& 	
$\frac12 y^2$ 			& $y$
	\end{tabular}
	\caption{Common entropy densities $\eta$.}
	\label{tab:ex-ent}
\end{table}

\subsection{Entropy-based moment methods}

Moment methods encapsulate the velocity-dependence of $f$ in a vector-valued 
function 
\begin{equation}
\bu(t, x)=(u_0(t, x), u_1(t, x), \dots , u_{n - 1}(t, x))) 
\end{equation}
that approximates the velocity averages of $f$ with respect to the vector of 
basis functions 
\begin{equation}
 \bm(v) = (m_0(v), m_1(v), \dots , m_{n - 1}(v)));
\end{equation}
that is, $u_i(t, x) \simeq \vint{m_i f(t, x, \cdot)}$ for all $i \in \{ 0, 1, 
\dots n - 1 \}$.
The components of $\bm$ are typically polynomials and include the collision 
invariants defined in \eqref{eq:conservation}.

The entropy-based moment method is a nonlinear Galerkin discretization in the 
velocity variable.
It has the form
\begin{equation}\label{eq:proj}
 \partial_t  \Vint{\bm F_{\bu}}
  + \nabla_x\cdot \Vint{v \bm  F_{\bu})} = \Vint{\bm 
  \cC(F_{\bu})},
\end{equation}
where $F_{\bu} = F_{\bu(t, x)}(v)$ is an ansatz that
approximates the distribution function $f$ and is consistent with the moment 
vector 
$\bu$.
Unlike the trial function in a traditional (linear) Galerkin method, $F_{\bu}$ 
is not assumed to be a linear combination of the basis functions in $\bm$.
Instead, in an entropy-based moment method, the ansatz is given by the solution 
of a constrained optimization problem whose objective function is 
defined via the kinetic entropy density $\eta$ introduced in the 
previous subsection.  Let
\begin{align}
 \cH(g) := \Vint{\eta(g)}.
\end{align}
Then the defining optimization problem is
\begin{equation}\label{eq:primal}
 \minimize_{g \in \bbF(V)} \: \cH(g)
  \qquad \st \: \Vint{\bm g} = \bv,
\end{equation}
where $\bv \in \bbR^n$ and
\begin{align}
\label{eq:range}
 \bbF(V) = \{ g \in L^1(V) : \text{Range}(g) \subseteq D 
  \}.
\end{align}


\begin{remark}
	Throughout the paper, we reserve the symbol $\bu = \bu(t,x)$ for the solution
	of a partial differential equation like \eqref{eq:proj} 
	(e.g., \eqref{eq:mn} and \eqref{eq:reg-mn} below).
	For a generic moment vector, independent of space and time, we use $\bv$.
	Thus we also use $\bv$ to label the argument of various moment-dependent 
	functions below.
	This deviates somewhat from standard notation but makes many of the 
computations more precise.
\end{remark}

The solution to \eqref{eq:primal}, if it exists,%
\footnote{In general, it may not.  See 
\cite{Jun00,Hauck-Levermore-Tits-2008,caflisch1986equilibrium,
Borwein-Lewis-1991}.
}
takes the form $\Gv$, where
\begin{align}\label{eq:ansatz}
\Ga := \eta'_*(\bsalpha \cdot \bm),
\end{align}
$\alphahat \colon \bbR^n \to \bbR^n$ maps $\bv$ to the solution of the
dual problem 
\begin{equation}\label{eq:dual}
\alphahat(\bv) = \argmax_{\bsalpha \in \R^n} 
			\left\{ \bsalpha \cdot \bv - \Vint{\etad(\bsalpha \cdot \bm)}\right\}
\end{equation} 
and $\etad$ is the Legendre dual%
\footnote{
See, e.g., \cite[\S 3.3.2.]{evans2010partial} or \cite[\S 3.3]{boyd2004convex}, 
where what we call the Legendre dual is called the conjugate function.
}
of $\eta$ (see Table \ref{tab:ex-ent}).
In this case, first-order necessary conditions for \eqref{eq:dual} imply that
\begin{align}\label{eq:dual-grad}
\Vint{\bm \Gv} = \bv.
\end{align}
Hence the function $\vhat \colon \bbR^n \to \bbR^n$ defined by
\begin{align}
\label{eq:vhat}
 \vhata := \Vint{\bm \Ga}
\end{align}
is the inverse of $\alphahat$, 
and the moment equations in \eqref {eq:proj} take the form
\begin{align}\label{eq:mn}
\partial_t \bu + \nabla_x \cdot \bff(\bu) &= \br(\bu),
\end{align}
where the flux function $\bff$ and relaxation term $\br$ are given by
\begin{align}\label{eq:f-and-r}
\bff(\bv) := \Vint{v \bm \Gv} \qquand
\br(\bv) := \Vint{\bm \cC(\Gv)}.
\end{align}

The appeal of the entropy-based approach to closure is that  \eqref{eq:mn} 
inherits many of the structural properties of
the kinetic equation \eqref{eq:kinetic}.
We summarize these here:

\begin{enumerate}[(i)]
	\item \emph{Invariant range}: The natural bounds on the kinetic 
	equation lead to a realizability condition on the solution $\bu$.
	A vector $\bv \in \R^n$ is called
	\emph{realizable (with respect to $\eta$ and $\bm$)} if there exists a
	$g \in \bbF(V)$ such that $\Vint{\bm g} = \bv$.
	The set of all realizable moment vectors is denoted by $\cR$.
	One expects formally that the solution $\bu$ of \eqref{eq:mn} satisfies 
	$\bu(t, x) \in \cR$ for all $(t,x) \in [0,\infty) \times X$.
	If $D = B$, then this means the solution  is always consistent with the 
	bounds on the kinetic density function $f$.
	
	\item \emph{Conservation}: If $m_i \in \bbE$, then
	$r_i(\bv) = \vint{m_i \cC(\Gv)} = 0$ and the $i$-th component of
	\eqref{eq:mn} is 
	\begin{align}\label{eq:conservation-mn}
	\partial_t u_i + \nabla_x \cdot  \Vint{v m_i \Gu} = 0.
	\end{align}
	
	\item \emph{Hyperbolicity \cite{Lev96}}:
	When expressed in terms of $\bsbeta(t,x):=\alphahat(\bu(t,x))$, \eqref{eq:mn}
	takes the form of a symmetric hyperbolic balance law
	\begin{align}\label{eq:mn_symhyp}
	 \hd''(\bsbeta) \partial_t \bsbeta
	  + \jd''(\bsbeta) \cdot \nabla_x \bsbeta &= \br(\vhat(\bsbeta)),
	\end{align}
	where 
	\begin{equation}\label{eq:entropy-entropy-flux-potentials}
	\hd(\bsalpha) = \vint{\etad(\bsalpha \cdot \bm)}
	 \quand
	\jd(\bsalpha) = \vint{v \etad(\bsalpha \cdot \bm)}
	\end{equation}
	are the entropy and entropy-flux potentials, respectively.
	Thus \eqref{eq:mn} is a symmetrizable hyperbolic system.
	
	\item \emph{Entropy dissipation \cite{Lev96}}:
	Assume that \eqref{eq:primal} has a solution for ever vector $\bv$ in the
	image of $\bu$, and let
	\begin{equation}\label{eq:entropy-entropy-flux}
	h(\bv) := \Vint{\eta(\Gv)}
	 \quand  j(\bv) := \Vint{v \eta(\Gv)}
	\end{equation}
	be the entropy and entropy flux, respectively.
	Using the hyperbolic structure of the left-hand side, one can show that $h$ 
	and $j$ are compatible with $\bff$, namely that
	\begin{align}\label{eq:eeflux}
	 j'(\bv) = h'(\bv) \cdot \frac{\partial \bff}{\partial \bv}.
	\end{align}
	Furthermore, we have
	$h'(\bv) \cdot \br(\bv) = \alphahatv \cdot \br(\bv) \le 0$ (where the 
	inequality follows immediately from \eqref{eq:entropy-diss}), and thus
	the moment equations \eqref{eq:mn} inherit a semi-discrete version of the 
	entropy-dissipation law in \eqref{eq:entropy-diss-kin-eq}: 
	\begin{align}\label{eq:entropy-diss-mn}
	\partial_t h(\bu) + \nabla_x \cdot j(\bu) = h'(\bv) \cdot \br(\bv) \le 0.
	\end{align}
	We note that the existence of the entropy and entropy flux pair satisfying
	\eqref{eq:eeflux} is equivalent to symmetric hyperbolicity as in
	\eqref{eq:mn_symhyp}. 
	The dissipation of the right hand side as stated in
	\eqref{eq:entropy-diss-mn}, however, does not translate automatically.
	\item \emph{H-Theorem \cite{Lev96}}:  The H-Theorem for the kinetic equation 
	can be used to show the equivalency of the following statements for 
	\eqref{eq:mn}:
	  \begin{align}
	{\rm (a)}~\alphahatv \cdot \br(\bv) = 0;
	\qquad
	{\rm (b)}~\br(\bv) = 0; 
	\qquad
	{\rm (c)}~\alphahatv \cdot \bm \in \bbE.
	\end{align}
	
	\item \emph{Galilean invariance \cite{JunUnt02}}:  If the kinetic equation is 
	invariant under a transformation $\cG_{O,w}$, defined in \eqref{eq:Galilean}, 
	and if $\operatorname{span}\{m_0,\dots,m_{n-1}\}$ is invariant under 
	$\cG_{O,w}$, then system \eqref{eq:mn} is also invariant under the inherited
	transformation
	\begin{equation}
	\label{eq:mn_Galinv}
	\cT_{O,w} \bu := \vint{ \bm \cG_{O,w} F_{\bu}}.
	\end{equation}
	If we let $T_{O,w}$ be the $n \times n$ matrix satisfying
	$\bm(O(v - w)) = T_{O,w} \bm(v)$,%
	\footnote{
	The subscripts of $T$ are given in the reverse of the order they're
	applied to be consistent with their order in matrix multiplication---i.e., 
	$T_{O, w} = T_{O, 0}T_{I, w}$, where $I$ is the $d \times d$ 
	identity matrix---so that the inverse $(T_{O, w})^{-1}$ is given by
	$T_{-w, O^{-1}} = T_{-w, I}T_{0, O^{-1}}$.
	}
	then we can give $\cT_{O,w}$ explicitly as
	\begin{align}\label{eq:T}
	 (\cT_{O,w} \bu)(t, x) = T^{-1}_{O,w}\bu(t, O(x - tw)).
	\end{align}
	Then the Galilean invariance of \eqref{eq:mn} is reflected by the identity
	\begin{align}\label{eq:mult-identity-gal}
	 \alphahat(T^{-1}_{O,w} \bv) = T^T_{O,w} \alphahatv
	  \qquad \text{(equivalently }
	 T^{-1}_{O,w} \vhata = \vhat(T^T_{O,w} \bsalpha)
	  \text{),}
	\end{align}
	(this can be derived using the first-order necessary conditions
	\eqref{eq:dual-grad})
	as well as the commutability of $\cT_{O,w}$ with the operator
	\begin{align}
	 (\partial_t + \nabla_x \cdot \bff - \br)\bu
	  := \partial_t \bu + \nabla_x \cdot \bff(\bu) - \br(\bu),
	\end{align}
	i.e.,
	\begin{align}\label{eq:mn-gal-inv}
	 (\partial_t + \nabla_x \cdot \bff - \br)(\cT_{O,w} \bu)
	  = \cT_{O,w}((\partial_t + \nabla_x \cdot \bff - \br)\bu).
	\end{align}
\end{enumerate}

\subsection{Realizability and relaxation of the entropy minimization problem}

The realizability condition introduced in the previous subsection can cause 
serious complications for numerical methods.
While it may seem advantageous (for physical reasons) to require that the 
solution in \eqref{eq:mn} be everywhere realizable,
it can unfortunately cause the closure procedure to fail 
rather unforgivingly in numerical simulations.
Specifically, if in the course of a simulation a numerical algorithm generates a 
vector 
$\bv \nin \cR$, then the primal problem \eqref{eq:primal} will be 
infeasible (i.e., the constraint set will be empty) and $\bff(\bv)$ and 
$\br(\bv)$ will not be well-defined.
Discretization errors can easily cause the numerical solution to take 
on values outside of the realizable set, and in such cases, the 
simulation will crash.

Although several algorithms have been designed to maintain the realizability of 
numerical solutions, each has significant limitations.
For example, two kinetic schemes have been proposed: the scheme in 
\cite{AllHau12} is limited to second-order, while the formally higher-order 
method from \cite{SchneiderAlldredge2016} relies on a limiter not rigorously 
shown to preserve accuracy.
Both kinetic schemes have the disadvantage of requiring spatial reconstructions 
for every node of the quadrature in the $v$ variable,%
\footnote{
In practice, the velocity integrals cannot be done analytically, so a 
quadrature is required.
}
and accuracy requirements dictate that there be significantly more nodes than 
moment components \cite{AllHau12}.
Discontinuous-Galerkin schemes have also been considered, but the scheme in 
\cite{Olbrant2012} is limited to first-order moment vectors and one spatial 
dimension, while the limiter used in \cite{AlldredgeSchneider2014} can destroy 
high-order accuracy and relies on an expensive approximate description of $\cR$.
What's more, a deeper problem obstructs the creation of 
realizability-preserving methods: the concrete description of $\cR$ in general 
remains an open problem \cite{LasserreBook}.
Finally, all second- or higher-order methods so far have been limited to 
explicit time integration, which cannot handle the stiffness of the equations 
near fluid-dynamical regimes 
\cite{jin1999ap,mcclarren2008semi,dimarco2013asymptotic} (although the recently 
developed 
algorithm \cite{hu2017asymptotic} may be applicable).

One way to overcome the feasibility issue in \eqref{eq:primal} is to relax the 
constraints.
This is the approach taken in \cite{Decarreau-Hilhorst-Lemarichal-Navaza-1992}, 
where the authors analyzed \eqref{eq:primal} in the context of an inverse 
problem.
Specifically, a function approximation was generated from partially observed 
experimental data that was given by the moment constraints.
Because measurement errors may generate nonrealizeable moments, the authors 
relaxed the equality constraints in \eqref{eq:primal} to arrive at the 
unconstrained problem
\begin{align}\label{eq:tik-primal}
\minimize_{g \in \bbF(V)} \:\cH_\gamma(g; \bv),
\end{align}
with the modified objective function
\begin{align}
\label{eq:H-gamma}
\cH_\gamma(g; \bv) := \Vint{\eta(g)} + \frac1{2\gamma}
\left\| \Vint{\bm g} - \bv \right\|^2.
\end{align}
Here $\gamma \in (0, \infty)$ is a parameter and $\|\cdot\|$ 
is the usual Euclidean norm on $\R^n$.
Unlike the original primal problem \eqref{eq:primal}, the relaxed problem 
\eqref{eq:tik-primal} is feasible for \emph{any} $\bv \in \R^n$ (not just 
$\bv \in \cR$), so we expect that it will have a solution for most, indeed 
perhaps all, $\bv \in \R^n$.

Whenever a solution to \eqref{eq:tik-primal} exists, it has the same 
form as that of the original primal problem:
\begin{align}
\argmin_{g \in \bbF(V)} \left\{ \cH_\gamma(g; \bv) \right\} =
\Ggv, 
\end{align}
where $\Ga$ is defined in \eqref{eq:ansatz} and $\alphahatg(\bv)$ is the
solution of the new dual problem:
\begin{align}\label{eq:reg-mult}
\alphahatgv := \argmax_{\bsalpha \in \R^n} \left\{
\bsalpha \cdot \bv
- \Vint{\eta_*(\bsalpha \cdot \bm)}
- \frac\gamma2 \|\bsalpha\|^2 \right\}.
\end{align}
Thus the relaxation of the constraints in the primal corresponds to a Tikhonov 
regularization of the dual 
\cite{Decarreau-Hilhorst-Lemarichal-Navaza-1992}.
For this reason, we refer to $\gamma$ as the regularization parameter.
Indeed, the condition number of the Hessian of the dual objective in 
\eqref{eq:reg-mult} is bounded from above by $1 + \gamma^{-1} c $, where $c$ is 
the maximum eigenvalue of the Hessian of the original dual function 
\eqref{eq:dual}; this bound decreases as 
$\gamma$ increases.
The regularization provided by $\gamma$ can be helpful for vectors $\bv \in 
\cR$ near the boundary of $\cR$, when the original dual problem 
\eqref{eq:dual} can be difficult to solve \cite{AllHau12}.

The price to pay for relaxing the constraints in \eqref{eq:primal} is the 
mismatch between $\vint{\bm \Ggv}$ and $\bv$; that is, unlike 
\eqref{eq:dual-grad}, $\vint{\bm \Ggv} \ne \bv$.
However, because of measurement or simulation errors, $\bv$ is not known 
precisely in practice anyway; nor can the dual problem \eqref{eq:dual} be 
solved exactly.
Hence if $\gamma$ is sufficiently small, then overall accuracy can be 
maintained.
This statement can be quantified more precisely using the following definition 
and theorem.
\begin{defn}
	\label{defn:tau-optimal}
	Let $\tau>0$.
	Then
	\begin{align}
	\bG_\gamma^\tau(\bv) := \left\{ g^* \in \bbF(V) : \cH_\gamma(g^*; \bv) \leq
	\inf_{g \in \bbF(V)} \left\{ \cH_\gamma(g; \bv) \right\} + \tau \right\}
	\end{align}
	is the set of all \emph{$\tau$-optimal} density functions.
\end{defn}

\begin{thm}[\!\!\cite{engl1989convergence,engl1993convergence}]
	\label{thm:acc}
	Let $\bv^\delta$ be a moment vector satisfying
	$\|\bv - \bv^\delta\| \le \delta$ for some $\bv \in \cR$ and
	$g \in \bG_\gamma^\tau(\bv^\delta)$.
	If $\gamma \sim \delta$ {\rm (}i.e., $\gamma = \cO(\delta)$ and
	$\delta = \cO(\gamma)${\rm )} and $\tau = \cO(\delta)$, then
	\begin{align}\label{eq:reg-err}
	\left\| \Vint{\bm g} -  \bv \right\| = \cO(\delta).
	\end{align}
\end{thm}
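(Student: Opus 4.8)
The plan is to specialize the classical convergence-rate argument for Tikhonov regularization of \cite{engl1989convergence,engl1993convergence} to the linear forward map $A\colon g \mapsto \Vint{\bm g}$ and the convex penalty $\cH(g) = \Vint{\eta(g)}$. Since $\bv$ is realizable, I would take as comparison density the entropy-minimizing closure $g^\dagger := \Gv = \eta'_*(\alphahatv \cdot \bm)$ --- assuming, as discussed at the end, that the primal \eqref{eq:primal} is solvable at $\bv$. By the first-order conditions \eqref{eq:dual-grad} it satisfies $A g^\dagger = \Vint{\bm g^\dagger} = \bv$, and because $\eta'_*$ is the inverse of $\eta'$ it also satisfies the \emph{source condition} $\eta'(g^\dagger) = \alphahatv \cdot \bm \in \operatorname{span}\{m_0,\dots,m_{n-1}\}$, which is exactly what upgrades the generic rate to the sharp bound \eqref{eq:reg-err}. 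Throughout, set $e := \| \Vint{\bm g} - \bv^\delta \|$; since $\| \Vint{\bm g} - \bv \| \le e + \delta$ by the triangle inequality, it suffices to show $e = \cO(\delta)$.

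First I would exploit the $\tau$-optimality of $g$ with $g^\dagger$ as competitor: by Definition~\ref{defn:tau-optimal},
\begin{align}
\cH_\gamma(g; \bv^\delta) \le \inf_{g' \in \bbF(V)} \cH_\gamma(g'; \bv^\delta) + \tau \le \cH_\gamma(g^\dagger; \bv^\delta) + \tau.
\end{align}
Writing out $\cH_\gamma(\,\cdot\,; \bv^\delta) = \cH(\,\cdot\,) + \tfrac{1}{2\gamma}\| A(\,\cdot\,) - \bv^\delta \|^2$, inserting $A g^\dagger = \bv$ and $\|\bv - \bv^\delta\| \le \delta$, and rearranging yields the fundamental inequality
\begin{align}\label{eq:sketch-fund}
\cH(g) - \cH(g^\dagger) \le \frac{\delta^2 - e^2}{2\gamma} + \tau .
\end{align}

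Next I would bound the left-hand side of \eqref{eq:sketch-fund} from below using convexity of $\eta$: since $g$ and $g^\dagger$ take values in $\dom(\eta)$, the pointwise bound $\eta(z_1) - \eta(z_2) \ge \eta'(z_2)(z_1 - z_2)$ integrates to $\cH(g) - \cH(g^\dagger) \ge \Vint{\eta'(g^\dagger)\,(g - g^\dagger)}$ (equivalently, the Bregman distance of $\cH$ between the two densities is nonnegative). By the source condition and linearity of $A$, one has $\Vint{\eta'(g^\dagger)\,(g - g^\dagger)} = \alphahatv \cdot ( \Vint{\bm g} - \bv ) \ge -\|\alphahatv\|\,(e + \delta)$, where the last step uses $\|\bv^\delta - \bv\| \le \delta$. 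Combining this with \eqref{eq:sketch-fund} and multiplying by $2\gamma > 0$ produces a quadratic inequality in $e$,
\begin{align}
e^2 - 2\gamma \|\alphahatv\|\, e - \bigl( \delta^2 + 2\gamma\tau + 2\gamma\|\alphahatv\|\,\delta \bigr) \le 0 ,
\end{align}
whence $e \le \gamma\|\alphahatv\| + \bigl( \gamma^2\|\alphahatv\|^2 + \delta^2 + 2\gamma\tau + 2\gamma\|\alphahatv\|\,\delta \bigr)^{1/2}$. Under the hypotheses $\gamma \sim \delta$ and $\tau = \cO(\delta)$, each of the four terms under the root is $\cO(\delta^2)$ and the leading term is $\cO(\delta)$, so $e = \cO(\delta)$ (with a constant depending on the fixed vector $\bv$ through $\|\alphahatv\|$), and then $\| \Vint{\bm g} - \bv \| \le e + \delta = \cO(\delta)$, which is \eqref{eq:reg-err}.

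I expect the only genuine obstacle to be the source condition. This estimate attains the rate $\cO(\delta)$ --- rather than the generic Tikhonov rate $\cO(\sqrt\delta)$, which one obtains by instead bounding $\cH(g)$ below by a constant and discarding the source term --- precisely because the realizable vector $\bv$ admits an exact entropy-minimizing closure whose multiplier $\alphahatv$ lies in $\operatorname{range}(A^{*}) = \operatorname{span}\{m_0,\dots,m_{n-1}\}$; when the primal \eqref{eq:primal} is not solvable at $\bv$, this step --- and with it the sharp rate --- fails. A rigorous write-up must also fix the correct function space on which $A$ acts, namely densities with finite $\bm$-moments rather than plain $L^1(V)$, so that the pairing $\Vint{\eta'(g^\dagger)(g-g^\dagger)}$ and the identity $\Vint{(\bsalpha \cdot \bm)\,h} = \bsalpha \cdot \Vint{\bm h}$ are justified; these are standard technical points in the entropy-closure literature.
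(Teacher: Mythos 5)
The paper offers no proof of this theorem: it is quoted directly from \cite{engl1989convergence,engl1993convergence}, so there is nothing internal to compare against. Your reconstruction is the standard convergence-rate argument for maximum-entropy/Tikhonov regularization and, as far as I can check, it is correct: the fundamental inequality obtained by testing $\tau$-optimality against the exact closure $g^\dagger=\Gv$, the convexity (Bregman) lower bound combined with the source condition $\eta'(g^\dagger)=\alphahatv\cdot\bm\in\operatorname{span}\{m_0,\dots,m_{n-1}\}$, and the resulting quadratic inequality in $e=\|\Vint{\bm g}-\bv^\delta\|$ all go through, and the parameter couplings $\gamma\sim\delta$, $\tau=\cO(\delta)$ make every term under the square root $\cO(\delta^2)$, giving the claimed rate. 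The one substantive caveat --- which you correctly flag yourself --- is that your argument uses strictly more than the stated hypothesis $\bv\in\cR$: it requires that the primal problem \eqref{eq:primal} actually attain its minimum at $\bv$, i.e.\ $\bv\in\vhatA$, which fails for the degenerate densities discussed in \secref{junk-preview} and the Appendix. This is consistent with the source-condition assumptions in the cited references, but it means the theorem as stated is slightly stronger than what your (and the classical) argument delivers; a careful statement would either restrict to nondegenerate $\bv$ or handle the degenerate case by an approximation argument. Finally, note that your constant degrades with $\|\alphahatv\|$ as $\bv$ approaches $\partial\cR$, which is exactly the nonuniformity the paper later addresses in \thmref{acc-new-no-tau} by restricting to $\cR^M$.
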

\noindent Theorem \ref{thm:acc} provides a strategy for choosing  $\gamma$ (and 
$\tau$) so that the regularized problem can be used to solve \eqref{eq:mn} 
without 
losing the order of accuracy.

\section{Regularized entropy-based closures}\label{sec:structure}

In this section, we propose a new set of closures, 
based on the regularization \eqref{eq:tik-primal}.
We replace \eqref{eq:mn} by the system
of regularized entropy-based moment equations
\begin{align}\label{eq:reg-mn}
\partial_t \bu + \nabla_x \cdot \bff_\gamma(\bu) &= \br_\gamma(\bu),
\end{align}
where (cf. \eqref{eq:f-and-r}) 
\begin{align}\label{eq:f-and-r-gam}
\bff_\gamma(\bv) := \Vint{v \bm \Ggv} \qquand
\br_\gamma(\bv) := \Vint{\bm \cC(\Ggv)}
\end{align}
are defined even when $\bv \nin \cR$.
The system \eqref{eq:reg-mn} can then used to approximate the original system 
\eqref{eq:mn} numerically without having to enforce realizability conditions 
explicitly.

In the remainder of the section, we examine the structural properties of the 
system of regularized moment equations \eqref{eq:reg-mn}.
For most of this section (particularly in Sections \ref{sec:mult-mom} and 
\ref{sec:rmn-structure}) we assume the primal problem \eqref{eq:tik-primal} has 
a minimizer.
While this assumption is necessary to rigorously justify many of the formal 
calculations that follow in this section, there are important cases for which 
it 
does not hold.
These exceptions are subject of \secref{junk-preview} and the Appendix. 
Under this assumption we use Legendre duality to establish the formal 
relationship between a moment vector $\bv$ and its corresponding multiplier 
vector $\alphahatgv$.
Then, as in \cite{Lev96}, this relationship allows us to investigate the
structure of the regularized system \eqref{eq:reg-mn}.

\subsection{Regularized moment-multiplier relationship}
\label{sec:mult-mom}

Many of the structural properties of \eqref{eq:mn} rely on duality relations,
which we establish here for the regularized case.
We first define the convex function ${\hg : \bbR^n \to \R}$ by
\begin{align}\label{eq:hg}
 \hg(\bv) := \inf_{g \in \bbF(V)} \left\{ \cH(g) + \frac1{2\gamma}
   \| \Vint{\bm g} - \bv \|^2 \right\}.
\end{align}
First-order optimality conditions for the dual \eqref{eq:reg-mult} imply that
\begin{align}\label{eq:reg-dual-grad}
\bv = \Vint{\bm \Ggv} + \gamma \alphahatgv.
\end{align}
From \eqref{eq:reg-dual-grad}, we conclude that
\begin{align}\label{eq:vhatg}
\vhatga := \vhat(\bsalpha)+ \gamma \bsalpha,
\end{align}
where $\vhat$ is defined in \eqref{eq:vhat}, is the inverse of $\alphahatg$.
When $\gamma= 0$, we recover the original moment map:
\begin{align}
 \vhatga\big|_{\gamma = 0} 
  	= \vhat(\bsalpha).
\end{align}
Furthermore, under the assumption that the infimum in \eqref{eq:hg} 
is attained, substitution of \eqref{eq:reg-dual-grad} into \eqref{eq:hg} gives
\begin{equation}
\hg(\bv)
	= \cH(\Ggv)
		+ \frac\gamma 2 \| \alphahatgv \|^2
	= h(\vhat(\alphahatgv)) + \frac\gamma 2 \| \alphahatgv \|^2
\end{equation}
Thus when $\bv$ is realizable, $\hg \to h$ (cf. 
\eqref{eq:entropy-entropy-flux}) as $\gamma \to 0$.%
\footnote{
This limit follows directly since (i) $h$ and $\vhat$ are continuous functions 
and (ii) $\alphahatg$ is continuous with respect to $\gamma$ for
$\gamma \in [0, \infty)$ when $\bv \in \cR$.
Property (ii) follows from the same continuity of $\vhatg$, the inverse of
$\alphahatg$.
}

Duality relations established in \cite{Borwein-Lewis-1991} imply that
$\hg(\bv)$ is equal the maximum of the regularized dual problem 
\eqref{eq:reg-mult}.
Therefore $\hg$ is, by definition, the Legendre dual of
the convex function ${\hgd \colon \R^n \to \R}$, defined by
\begin{align}\label{eq:hgd}
 \hgd(\bsalpha) := \Vint{\etad(\bsalpha \cdot \bm)}
  + \frac\gamma2 \|\bsalpha\|^2.
\end{align}
Differentiating this formula gives $\hgd'(\bsalpha) = \vhatga$. 
According to the theory of Legendre duality $(\hgd')^{-1} = \hg'$, so from 
\eqref{eq:reg-dual-grad} we have
\begin{align}
 \hg'(\bv) = \alphahatgv.
\end{align}

The Hessian matrices are now straightforwardly computed:
\begin{align}
 \hgd''(\bsalpha) &= \frac{\partial \vhatg}{\partial \bsalpha}
   = \Vint{\bm \bm \cdot \etad''(\bsalpha \cdot \bm)} + \gamma I
   =: H_\gamma(\bsalpha), \quad \text{and} \label{eq:hess-g} \\
 \hg''(\bv) &= \frac{\partial \alphahatg}{\partial \bv}
   = H^{-1}_\gamma(\alphahatgv), \label{eq:hess-inv-g}
\end{align}
where $I$ is the $n \times n$ identity matrix.

\subsection{Structural properties of the regularized equations}
\label{sec:rmn-structure}

The duality relations from the last section now allow us to check whether the 
regularized moment system inherits the structural properties of the underlying 
kinetic equation.

\begin{enumerate}[(i)]
	\item \emph{Invariant range}: While the regularized equations are defined 
	even for nonrealizable moment vectors, the underlying ansatz $\Ggv$ used in 
	the flux and collision terms takes on the same range of values as the 
	original entropy ansatz.
	
	\item \emph{Conservation}: If $m_i \in \bbE$, then
	$r_{\gamma,i}(\bv) = \vint{m_i \cC(\Ggv)} = 0$ and the $i$-th component of
	\eqref{eq:mn} is 
	\begin{align}\label{eq:conservation-rmn}
	\partial_t u_i + \nabla_x \cdot  \Vint{v m_i \Ggu} = 0.
	\end{align}
	
	\item \emph{Hyperbolicity}:
	When expressed in terms of $\bsbeta(t, x) := \alphahatg(\bu(t, x))$, 
	\eqref{eq:reg-mn} takes the form of a symmetric hyperbolic balance law
	\begin{align}\label{eq:rmn_symhyp}
	 \hgd''(\bsbeta) \partial_t \bsbeta
	  + \jd''(\bsbeta) \cdot \nabla_x \bsbeta &= \br_\gamma(\bu),
	\end{align}
	where $\jd$ is the original entropy-flux potential (see 
	\eqref{eq:entropy-entropy-flux-potentials}).
	Thus \eqref{eq:reg-mn} is also a symmetrizable hyperbolic system.
	
	\item \emph{Entropy dissipation}:  
	With the original entropy flux in mind, we define
	\begin{align}
	\jg(\bv) := \Vint{v \eta(\Ggv)}.
	\end{align}
	Then $\hg$ and $\jg$ are compatible with $\bff_\gamma$, i.e.,
	\begin{align}
	 j'(\bv) = h'(\bv) \cdot \frac{\partial \bff}{\partial \bv},
	\end{align}
	and we also have
	$\hg'(\bv) \cdot \br(\bv) = \alphahatgv \cdot \br_\gamma(\bv) \le 0$ from
	\eqref{eq:entropy-diss}.
	Thus the regularized moment equations \eqref{eq:reg-mn} have the
	entropy-dissipation law
	\begin{align}\label{eq:entropy-diss-rmn}
	\partial_t \hg(\bu) + \nabla_x \cdot \jg(\bu) = \hg(\bu) \cdot \alphahatgu
	\le 0.
	\end{align}
	\item \emph{H-Theorem}:  Just as with the original equations, the 
	following statements are equivalent:
	\begin{align}
	 {\rm (a)}~\alphahatgv \cdot \br_\gamma(\bv) = 0;
	 \qquad
	 {\rm (b)}~\br_\gamma(\bv) = 0; 
	 \qquad
	 {\rm (c)}~\alphahatgv \cdot \bm \in \bbE.
	\end{align}
	However, the moment vectors $\bv$ satisfying these conditions may not be 
	the same as those of the original system, i.e.,
	$\br_\gamma^{-1}(0) \ne \br^{-1}(0)$.
	\item \emph{Galilean invariance}:
	In order to take advantage of the Galilean invariance of the original 
	equations, we use the identity
	$\bu = \vhat(\alphahatgu) + \gamma \alphahatgu$ and write the regularized 
	equations as
	\begin{align}\label{eq:reg-mn-0-lhs}
	 0 = \gamma \partial_t \alphahatgu
	  + (\partial_t + \nabla_x \cdot \bff - \br)(\vhat(\alphahatgu)).
	\end{align}
	It turns out that we must consider rotations and velocity translations 
	separately.
	Let's first consider the rotation $\cT_{O,0}$.
	Note that if the matrix $T_{O,0}$ (recall \eqref{eq:T}) is orthogonal, we have
	$\alphahatg(T^{-1}_{O,0}\bv) = T^{-1}_{O,0} \alphahatgv$ (from the 
	first-order necessary conditions \eqref{eq:reg-dual-grad}) and thus
	$\partial_t \alphahatg(\cT_{O,0}\bu) = \cT_{O,0} \partial_t \alphahatgu$.
	When we combine this with \eqref{eq:mult-identity-gal} and
	\eqref{eq:mn-gal-inv}, we have
	\begin{subequations}
	\begin{align}
	 0 &= \gamma \partial_t \alphahatg(\cT_{O,0} \bu)
	    + (\partial_t + \nabla_x \cdot \bff - \br)
	    (\vhat(\alphahatg(\cT_{O,0} \bu))) \\
	   &= \cT_{O,0}(\gamma \partial_t \alphahatgu
	    + (\partial_t + \nabla_x \cdot \bff - \br)(\vhat(\alphahatgu))),
	\end{align}
	\end{subequations}
	which shows that the regularized moment system is rotationally invariant.
	One can show that the matrix $T_{O,0}$ is indeed orthogonal if there exists a 
	radially symmetric weight function $\omega = \omega(v)$ so that
	$\vint{\bm \bm^T \omega} = I$, i.e., so that the basis functions are 
	orthonormal with respect to $\omega$.%
	\footnote{
	We show this using $\vint{\bm \bm^T \omega} = I$ and computing
	\begin{gather}\label{eq:T-orth-rot}
	 T^{-1}_{O,0} = T^{-1}_{O,0} \Vint{\bm \bm^T \omega}
	  = \Vint{\bm(O^{-1}v) \bm^T \omega}
	  = \Vint{\bm(v) (\bm(O v))^T \omega(|O v|)} \nonumber \\
	  = \Vint{\bm (T_{O,0}\bm)^T \omega(|v|)}
	  = T_{O,0}^T.
	\end{gather}
	(Here we use $|\cdot|$ for the Euclidean norm on $\R^d$ and reserve
	$\|\cdot\|$ for the Euclidean norm for moment vectors.)
	This orthonormality assumption holds, e.g., for the normalized spherical 
	harmonics on the unit sphere.
	}
	However, for a velocity translation $\cT_{I,w}$ we have
	\begin{align}
	 \partial_t \alphahatg(\cT_{I,w}\bu)
	  = \frac{\partial \alphahatg}{\partial \bv}\left(T_{I,w}\left((\partial_t \bu
	   + w \cdot \nabla_x \bu)\big|_{(t, x - tw)} \right) \right).
	\end{align}
	Even if $T_{I,w}$ is orthogonal, the additional $w \cdot \nabla_x \bu$ term 
	is neither part of $\cT_{I,w} \partial_t \alphahatgu$ nor is it canceled by 
	anything else in the right-hand side of \eqref{eq:reg-mn-0-lhs}.
	Thus the regularized equations fail to be translation invariant.
\end{enumerate}

\subsection{Degenerate densities}
\label{sec:junk-preview}

One of the major drawbacks of entropy-based moment closures is that there exist 
realizable moment vectors $\bv$ for which the original primal problem 
\eqref{eq:primal} has no solution.
For these \emph{degenerate densities}, many of the structural properties of the 
entropy-based formulation are lost.
The geometry of these densities was investigated in detail for the 
Maxwell-Boltzmann entropy \cite{Junk-1998}, with $V = \R$ and
${\bm(v) = (1, v, v^2, v^3, v^4)}$; extensions to multiple dimensions and more 
general polynomial basis functions can be found in 
\cite{Jun00,schneider2004entropic,Hauck-Levermore-Tits-2008}.

Unfortunately, the regularization does not fix the problem of degeneracy.
Indeed, there are also moment vectors $\bv$ for which the regularized primal 
problem \eqref{eq:tik-primal} does not achieve its minimum.
For the original primal, the fundamental issue is that for a fixed $\bv$ the 
constraint set $\{g \in \bbF(V) : \vint{\bm g} = \bv \}$ is not closed when $V$ 
is unbounded, in particular when $V = \R^d$, because the map
$g \mapsto \vint{\bm g}$ is not continuous.
This issue carries over to the regularized problem, since this discontinuous map 
appears in the objective function $\cH_\gamma$, so that $\cH_\gamma$ is not
lower-semicontinuous.

Although not exactly the same, the set of degenerate moment vectors for the
regularized problem can be characterized in the same fashion as the degenerate 
moment vectors for the original problem.
As an illustrative example, consider the Maxwell--Boltzmann entropy with
$V = \R^d$ and $m_{n - 1}(v) = |v|^N$, where $m_{n - 1}$ is the only component 
of $\bm$ with degree greater than or equal to $N$.
(This includes the example mentioned above from \cite{Junk-1998}).
Let $\cA$ be the set of multiplier vectors such that $\Ga \in L^1(V)$.
Then the main result of \cite{Jun00} can be extended to the following:

\begin{prop}\label{prop:junk-line-g}
If $\bv$ can be written as
\begin{align}\label{eq:junk-form-g}
 \bv = \vhatg(\alphabar) + \begin{pmatrix}
                            0 \\ \vdots \\ 0 \\ \delta
                           \end{pmatrix},
\end{align}
for some $\alphabar \in \cA \cap \partial \cA$ and $\delta \in (0, \infty)$,
then $\bv$ is a degenerate density for the regularized problem, i.e., 
$\cH_\gamma(\cdot ; \bv)$ does not achieve its minimum.
\end{prop}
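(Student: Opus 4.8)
The plan is to argue on the dual side, using the strong-duality identity of \cite{Borwein-Lewis-1991} --- that for every $\bv \in \R^n$ the value $\hg(\bv)$ equals the supremum of the regularized dual objective $D(\bsalpha) := \bsalpha \cdot \bv - \hgd(\bsalpha)$ of \eqref{eq:reg-mult} --- together with the explicit form that a minimizer of \eqref{eq:tik-primal} must take when it exists. Write $e_{n-1} = (0, \dots, 0, 1)^T$ for the last standard basis vector of $\R^n$. First I would fix the geometry of $\cA$ induced by the hypothesis that $m_{n-1}(v) = |v|^N$ is the only component of $\bm$ of degree $\ge N$: writing $\bsalpha \cdot \bm(v) = \alpha_{n-1} |v|^N + q_{\bsalpha}(v)$ with $\deg q_{\bsalpha} < N$, the sign of $\alpha_{n-1}$ governs the growth of $e^{\bsalpha \cdot \bm}$ as $|v| \to \infty$, so $\{\bsalpha : \alpha_{n-1} < 0\} \subseteq \cA$ while $\cA \cap \{\bsalpha : \alpha_{n-1} > 0\} = \emptyset$; hence $\operatorname{int}\cA = \{\alpha_{n-1} < 0\}$, and $\alphabar \in \cA \cap \partial\cA$ forces $\bar\alpha_{n-1} = 0$. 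Since $\alphabar \in \cA$ then means $\alphabar \cdot \bm = q_{\alphabar} \to -\infty$ at infinity, $e^{\alphabar \cdot \bm}$ decays faster than any polynomial, so every moment $\Vint{m_i e^{\alphabar \cdot \bm}}$ --- including $i = n-1$ --- is finite, $\eta(e^{\alphabar \cdot \bm}) \in L^1(V)$ (whence $e^{\alphabar \cdot \bm} \in \bbF(V)$), and $\vhatg(\alphabar) = \vhat(\alphabar) + \gamma \alphabar$ is well defined.

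Next I would show that $\alphabar$ is the \emph{unique} maximizer of $D$ over $\R^n$, so that $\hg(\bv) = \sup_{\bsalpha} D(\bsalpha) = D(\alphabar)$. Uniqueness holds because $\hgd$ contains the strictly convex summand $\tfrac{\gamma}{2}\|\bsalpha\|^2$, making $D$ strictly concave. That $\alphabar$ maximizes $D$ reduces to the subgradient inequality
\begin{align} \label{eq:sketch-subgrad}
 \hgd(\bsalpha) \ge \hgd(\alphabar) + \vhatg(\alphabar) \cdot (\bsalpha - \alphabar) \qquad \text{for all } \bsalpha \in \cA ,
\end{align}
which I would get by starting from the gradient inequality for $\hgd$ at the interior point $\alphabar - \epsilon\, e_{n-1}$ (using $\hgd' = \vhatg$ on $\operatorname{int}\cA$) and letting $\epsilon \downarrow 0$, the passage to the limit being justified by monotone convergence for $\hgd(\alphabar - \epsilon\, e_{n-1}) \to \hgd(\alphabar)$ and dominated convergence for $\vhat(\alphabar - \epsilon\, e_{n-1}) \to \vhat(\alphabar)$ (all moments at $\alphabar$ being finite). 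Granting \eqref{eq:sketch-subgrad}, for $\bsalpha \in \cA$,
\begin{align}
 D(\bsalpha) - D(\alphabar) \le (\bsalpha - \alphabar) \cdot \bigl( \bv - \vhatg(\alphabar) \bigr) = (\bsalpha - \alphabar) \cdot \delta\, e_{n-1} = \delta\, \alpha_{n-1} \le 0 ,
\end{align}
using $\bv - \vhatg(\alphabar) = \delta\, e_{n-1}$, $\bar\alpha_{n-1} = 0$, and $\alpha_{n-1} \le 0$ on $\cA$; since $D = -\infty$ off $\cA$, this shows $\alphabar$ is the maximizer and $\hg(\bv) = D(\alphabar)$.

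Finally, suppose for contradiction that $\cH_\gamma(\cdot ; \bv)$ attains its infimum. By the form of a minimizer of \eqref{eq:tik-primal}, that minimizer must be $\Ggv = \etad'(\alphahatgv \cdot \bm)$ with $\alphahatgv$ the dual maximizer; since the dual maximizer is the unique point $\alphabar$, the minimizer must be $e^{\alphabar \cdot \bm}$. But expanding $\Vint{\eta(e^{\alphabar \cdot \bm})} = \alphabar \cdot \vhat(\alphabar) - \Vint{e^{\alphabar \cdot \bm}}$ and using $\bar\alpha_{n-1} = 0$ and $\bv = \vhat(\alphabar) + \gamma \alphabar + \delta\, e_{n-1}$ gives
\begin{align}
 \cH_\gamma(e^{\alphabar \cdot \bm}; \bv) = D(\alphabar) + \frac{\delta^2}{2\gamma} = \hg(\bv) + \frac{\delta^2}{2\gamma} > \hg(\bv) = \inf_{g \in \bbF(V)} \cH_\gamma(g; \bv)
\end{align}
because $\delta > 0$. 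Thus $e^{\alphabar \cdot \bm}$ does \emph{not} attain the infimum --- a contradiction. Hence $\cH_\gamma(\cdot ; \bv)$ has no minimizer, i.e.\ $\bv$ is a degenerate density for the regularized problem.

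The step I expect to be the main obstacle is \eqref{eq:sketch-subgrad}: making rigorous the interchange of limit and velocity integral for multipliers approaching the boundary point $\alphabar$ from within $\cA$. This is precisely where the hypothesis that $m_{n-1}$ is the only top-degree component of $\bm$ is used --- it forces $\bar\alpha_{n-1} = 0$ (hence the super-polynomial decay of $e^{\alphabar \cdot \bm}$ and the finiteness of the relevant moments) and keeps $\cA$ on one side of the hyperplane $\{\alpha_{n-1} = 0\}$ near $\alphabar$. The remaining ingredients --- strict concavity of $D$, the form of the minimizer (cited), and the completing-the-square computation --- are routine.
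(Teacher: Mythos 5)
Your proof is correct, and while it shares its central step with the paper's argument, it closes the proof by a genuinely different route. Both proofs hinge on the same observation: concavity of the regularized dual objective plus the sign constraint $\alpha_{n-1}\le 0$ on $\cA$ gives $\psi_\gamma(\bsalpha;\bv)-\psi_\gamma(\alphabar;\bv)\le \delta\,\alpha_{n-1}\le 0$, so $\alphabar$ is the unique dual maximizer; and both defer the same technical point (differentiability/continuity of the dual objective at the boundary multiplier $\alphabar$, which the paper handles by citing \cite[Lemma 5.2]{Jun00} and you handle by a sketched monotone/dominated convergence argument). Where you diverge is the endgame. The paper argues by contrapositive through \lemref{min-vhatg}: a minimizer would force $\bv-\gamma\alphahatgv\in\vhatA$, and since $\bv-\gamma\alphabar=\vhat(\alphabar)+\delta e_{n-1}$, the original (unregularized) degeneracy theorem of \cite{Jun00} says this vector is not in $\vhatA$. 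You instead invoke strong duality ($\hg(\bv)=\sup_{\bsalpha}\psi_\gamma(\bsalpha;\bv)$, which the paper itself asserts via \cite{Borwein-Lewis-1991}) and compute the primal value at the only admissible candidate $G_{\alphabar}$, finding it exceeds the infimum by exactly $\delta^2/(2\gamma)$. Your version buys an explicit, quantitative duality gap and avoids re-importing Junk's characterization of the image $\vhatA$; the paper's version avoids leaning on zero duality gap for a primal whose infimum is not attained and connects the regularized degenerate set directly to the known geometry of the unregularized one. Note also that your final step could be shortened: once you know any minimizer must be $\Ga$ with $\bv=\vhatga$ (the full statement of \lemref{min-vhatg}) and that the only critical point is $\alphabar$, the identity $\vhatg(\alphabar)=\bv-\delta e_{n-1}\ne\bv$ already yields the contradiction without the value computation.
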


The results from \cite{Junk-1998,Jun00} are recovered when $\gamma = 0$.
We postpone a proof and further discussion to the Appendix.

\subsection{Examples}\label{sec:ex}

Now we take a look at how the regularization affects the most well-known 
instances of the entropy-based moment method.
The simplest case is the P$_N$ equations of radiation transport. 
For the case of bounded velocity domains, we also consider the M$_1$ equations.
For the case of unbounded velocity domains, we study the Euler equations.
For the latter two, we only consider the one-dimensional cases for simplicity.
To make some computations feasible, we define a partially regularized version of
\eqref{eq:primal}:
	\begin{subequations}
		\label{eq:p-tik-primal}
		\begin{align}
		\minimize_{g \in \bbF(V)} \quad & \Vint{\eta(g)}
		+ \frac1{2\gamma}
		\sum_{i = m + 1}^{n - 1} \left( \Vint{m_i g} - v_i \right)^2, \\
		\st \quad & \Vint{m_i g}
		= v_i, \quad i \in \{ 0, 1, \dots , m \},
		\label{eq:u0-constraint}
		\end{align}
	\end{subequations}
	with dual problem
	\begin{align}
	\maximize_{\bsalpha \in \R^n}\:
	\bsalpha \cdot \bv - \Vint{\eta_*(\bsalpha \cdot \bm)}
	- \frac\gamma2 \sum_{i = m + 1}^{n - 1} \alpha_i^2.
	\end{align}
	For the existence of a solution to the primal and dual problems, the
	subvector $(v_0, v_1, \dots , v_m)$ must of course satisfy realizability
	conditions.

\subsubsection{Regularized P$_N$ equations}
\label{sec:reg-pn}

Consider as velocity domain the unit sphere $V = S^2$, and the spherical
harmonics as basis functions.
The P$_N$ equations are an entropy-based closure with the entropy density
$\eta(g) = \frac12 g^2$. This function is equal to its Legendre dual,
$\eta = \eta_*$. 

The unregularized multipliers satisfy $\Vint{\bm\bm^T} \alphahatv =\bv$.
Since the spherical harmonics are an orthonormal basis, i.e.,
$\Vint{ \bm \bm^T} = I$, the ansatz is $G_{\alphahatv} = \bm \cdot \bv$.
The regularized multipliers satisfy
\begin{align}
 (\gamma I+\Vint{\bm\bm^T}) \alphahatgv =\bv,
\end{align}
so 
$G_{\alphahatgv} = \frac{1}{1+\gamma}\bm\cdot \bv$.
This leads to
\begin{align}
 \bff_\gamma(\bv)=\frac{1}{1+\gamma} \bff(\bv).
\end{align}
Hence the regularization acts as a filter \cite{mcclarren2010simulating,mcclarren2010robust} that damps the flux of the original equations.

\subsubsection{Regularized M$_1$ equations}

Here the velocity domain is $V = [-1, 1]$ (i.e., the one-dimensional 
slab-geometry setup) and $\bm(v) = (1, v)$.
The realizable set is given by ${\cR = \{(v_0, v_1) \in \R^2 : |v_1| < v_0 \}}$.
We consider the Maxwell--Boltzmann entropy.%
\footnote{
The M$_1$ method is also often applied to the gray equations for photon 
transport using the Bose--Einstein entropy.
These equations have the advantage that the flux $\bff$ can be given 
analytically \cite{Dubroca-Feugas-1999}.
Unfortunately, this property is (as far as we can tell) destroyed by the 
introduction of $\gamma$, so we do not discuss this particular example in 
further detail.%
}

While no analytical expression can be obtained for the multipliers, one can 
eliminate the zero-th order multiplier $\hat \alpha_0(v_0, v_1)$ so that the 
optimal first-order multiplier $\hat \alpha_1(v_0, v_1)$ satisfies the single 
equation \cite{Min78,BruHol01}
\begin{align}\label{eq:alpha1}
 \frac{v_1}{v_0} = \coth(\hat \alpha_1) - \frac1{\hat \alpha_1},
\end{align}
where for clarity of exposition we suppress the dependence of the optimal 
multipliers on the moment components.
The map $\alpha_1 \mapsto \coth(\alpha_1) - 1 / \alpha_1$ is indeed a 
smooth bijection between $\R$ and $(-1, 1)$, which is consistent with the
existence and uniqueness of the multipliers for $(v_0, v_1) \in \cR$.

We have been unable to decouple the equations for $\hat \alpha_{\gamma, 0}$ and 
$\hat \alpha_{\gamma, 1}$ when regularization is applied to both moment 
components.
However, when we only regularize the first-order moment, i.e., when we solve
\begin{subequations}
\begin{align}
 v_0 &= \Vint{\exp(\hat \alpha_{\gamma, 0} + \hat \alpha_{\gamma, 1} \mu)}
  = \frac2{\hat \alpha_{\gamma, 1}} \exp(\hat \alpha_{\gamma, 0})
   \sinh(\hat \alpha_{\gamma, 1}), \\
 v_1 &= \Vint{\mu \exp(\hat \alpha_{\gamma, 0} + \hat \alpha_{\gamma, 1} \mu)}
   + \gamma \hat \alpha_{\gamma, 1}1 \nonumber \\
  &= \frac2{\hat \alpha_{\gamma, 1}} \exp(\hat \alpha_{\gamma, 0})
   \left( \cosh(\hat \alpha_{\gamma, 1})
   + \frac{\sinh(\hat \alpha_{\gamma, 1})}{\hat \alpha_{\gamma, 1}}  \right)
   + \gamma \hat \alpha_{\gamma, 1},
\end{align}
\end{subequations}
then we can again isolate $\hat \alpha_{\gamma, 1}$ to get
\begin{align}\label{eq:alpha1gam}
 \frac{v_1}{v_0} = \coth(\hat \alpha_{\gamma, 1})
  - \frac1{\hat \alpha_{\gamma, 1}} + \frac\gamma{v_0} \hat \alpha_{\gamma, 1}.
\end{align}
The map
$\alpha_1 \mapsto \coth(\alpha_1) - 1 / \alpha_1 + \gamma \alpha_1 / v_0$
is a smooth bijection from $\R$ to $\R$---under the assumption $v_0 > 0$ (which 
is necessary for the existence of a minimizer in the partially regularized 
case).
Thus the partially regularized problem has a solution for
$(v_0, v_1) \in \{ (v_0, v_1) : v_0 > 0 \} \supset \cR$.
\figref{m1alpha1} plots the maps \eqref{eq:alpha1} and \eqref{eq:alpha1gam}.

\begin{figure}
\label{fig:comparison}
\begin{subfigure}[t]{0.48\linewidth} 
\includegraphics[width=\linewidth]{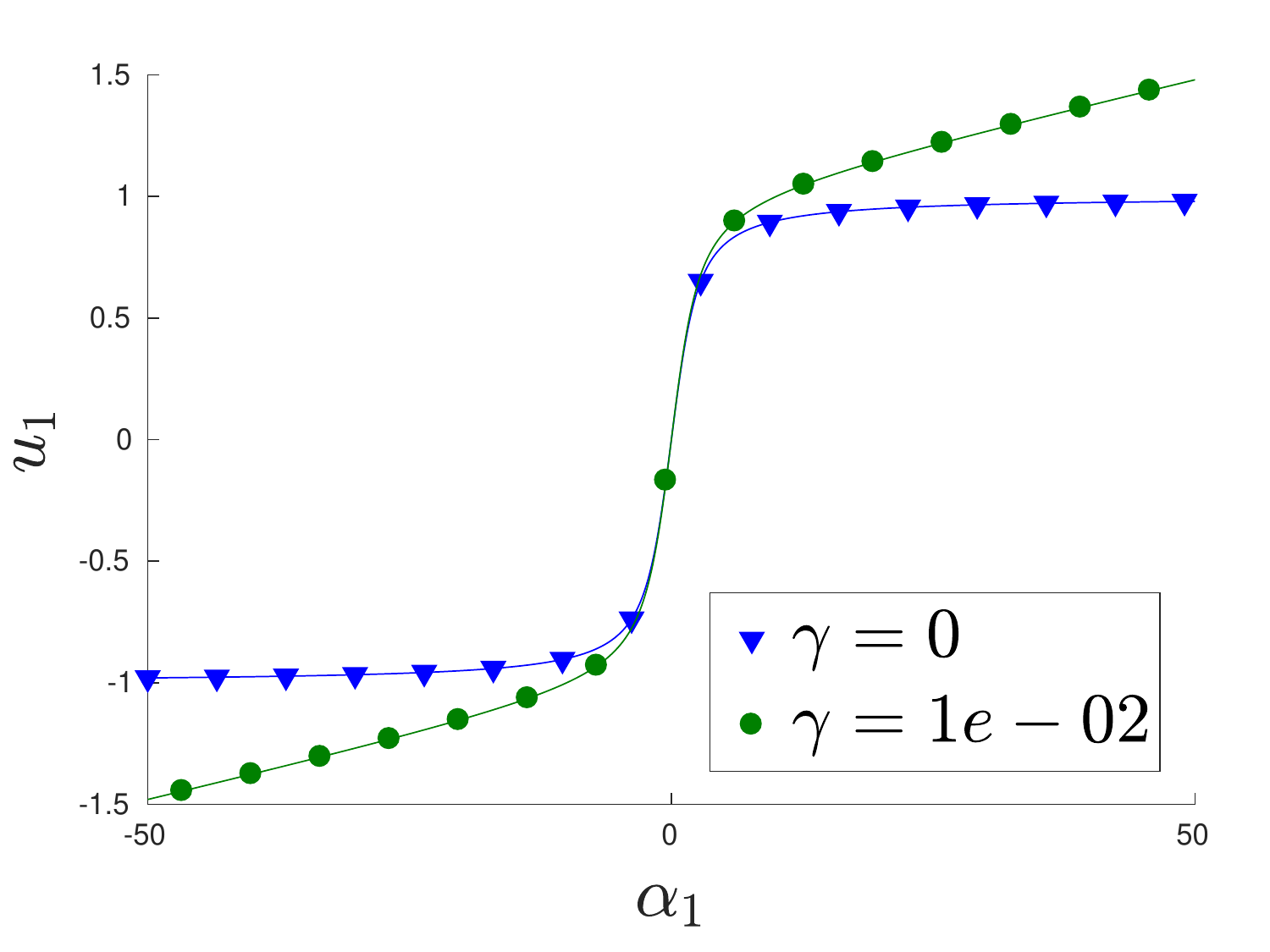}
\subcaption{Comparison of the maps \eqref{eq:alpha1} and \eqref{eq:alpha1gam} 
  which relate the optimal first-order multiplier to the moment components
  for M$_1$ with Maxwell--Boltzmann statistics with $v_0 = 1$.
   \label{fig:m1alpha1}
}
\end{subfigure}
\hfill
\begin{subfigure}[t]{0.48\linewidth} 
 \includegraphics[width=\linewidth]{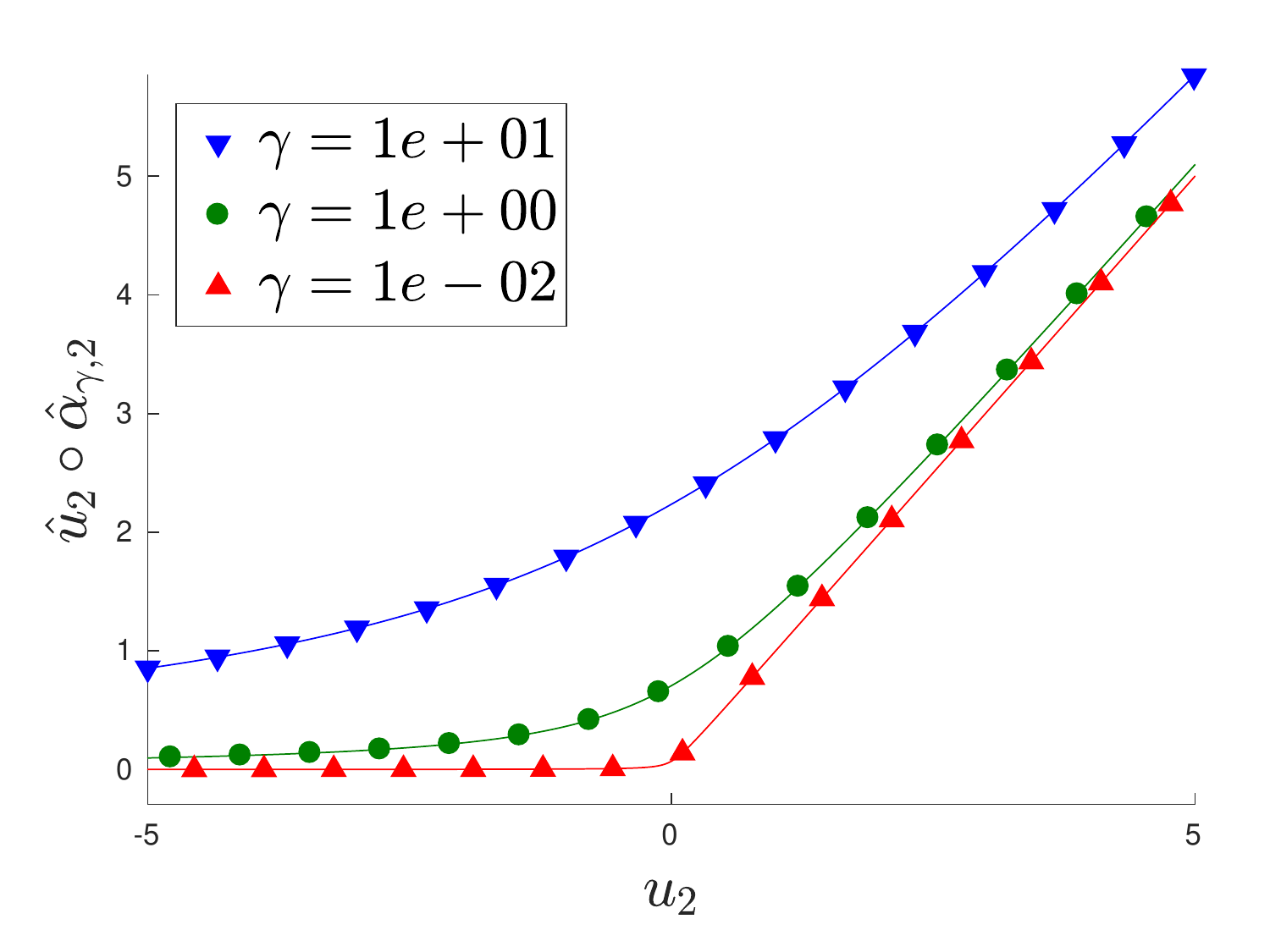}
 \subcaption{Comparison of the regularized closures for $v_2$ of the Euler
 equations, given by \eqref{eq:reg-euler}.
 Here $v_0 = 1$ and $v_1 = 0$.}
  \label{fig:u2g}
\end{subfigure}
\caption{Comparison of regularized versus unregularized closures.}
\end{figure}

%
%
%
%

\subsubsection{Regularized Euler equations}

With $V = \R^3$ and $\bm(v) = \{1, v, |v|^2\}$, the original entropy-based 
moment equation gives the compressible Euler equations \cite{Lev96}.
In one-dimension, i.e., $V = \R$ and ${\bm(v) = (1, v, v^2)}$.
The realizable set is $\cR = \{ (v_0, v_1, v_2) \in \R^3 : v_0 v_2 > v_1^2 \}$.
The moment and optimal multiplier components satisfy
\begin{subequations}
\begin{align}
 v_0 &= \sqrt{-\frac\pi{\hat \alpha_2}} \exp \left( \hat \alpha_0
  - \frac{\hat \alpha_1^2}{4 \hat \alpha_2} \right), \label{eq:a0euler} \\
 v_1 &= \sqrt{-\frac\pi{\hat \alpha_2}} \exp \left( \hat \alpha_0
  - \frac{\hat \alpha_1^2}{4 \hat \alpha_2} \right)
  \frac{-\hat \alpha_1}{2\hat \alpha_2}, \label{eq:a1euler} \\
 v_2 &= \sqrt{-\frac\pi{\hat \alpha_2}} \exp \left( \hat \alpha_0
  - \frac{\hat \alpha_1^2}{4 \hat \alpha_2} \right)
  \left( \frac{\hat \alpha_1^2}{4\hat \alpha_2^2}
  - \frac1{2 \hat \alpha_2}\right), \label{eq:a2euler}
\end{align}
\end{subequations}
and one can readily invert these equations.

Again, we have been unable to solve these equations analytically when all 
moment components are regularized.
We were only able to find an analytical solution for the case when we only 
relax the equality constraint on $v_2$.
In this case \eqref{eq:a2euler} becomes
\begin{align}
 v_2 = \sqrt{-\frac\pi{\hat \alpha_{\gamma, 2}}}
  \exp \left( \hat \alpha_{\gamma, 0}
  - \frac{\hat \alpha_{\gamma, 1}^2}{4 \hat \alpha_{\gamma, 2}} \right)
  \left( \frac{\hat \alpha_{\gamma, 1}^2}{4\hat \alpha_{\gamma, 2}^2}
  - \frac1{2 \hat \alpha_{\gamma, 2}} \right) + \gamma \hat \alpha_{\gamma, 2},
\end{align}
and with appropriate substitutions of \eqref{eq:a0euler}--\eqref{eq:a1euler}
(with the multipliers now labeled with $\gamma$), we get
\begin{align}
 \hat \alpha_{\gamma, 2} = \frac{v_2 v_0 - v_1^2
  - \sqrt{(v_2 v_0 - v_1^2)^2 + 2\gamma v_0^3}}{2\gamma v_0}.
\end{align}
Then the regularized second-order moment becomes:
\begin{align}
\label{eq:reg-euler}
 \hat v_2(\alphahatgv) = v_0 \left(
  \frac{v_1^2}{v_0^2} + \frac{v_2 v_0 - v_1^2
  + \sqrt{(v_2 v_0 - v_1^2)^2 + 2\gamma v_0^3}}{2v_0^2} \right).
\end{align}
\figref{u2g} shows this relationship, and it behaves as expected: when $\gamma$ 
approaches zero, it approaches the identity map for positive $v_2$;
otherwise, for nonphysical negative values of $v_2$, the map returns small 
positive values which get even smaller as $\gamma$ goes to zero.

\section{Accuracy of the closure}
\label{sec:acc}

While the properties in Section \ref{sec:rmn-structure} provide basic structure
of the regularized entropy-based moment equations \eqref{eq:reg-mn}, 
\thmref{acc} hints at an attractive possible application: the use the 
regularized system to accurately solve the \emph{original} moment system 
\eqref{eq:mn}.
To explore this idea further, we note that
$\bff_\gamma(\bv) = \bff (\vhat(\alphahatgv) $, where 
$\vhat(\alphahatgv) \in \cR$ is the regularization of $\bv \in \bbR^n$.
(Indeed, the term $\vint{\bm g}$ in \eqref{eq:reg-err} is 
an approximate evaluation of $\vhat \circ \alphahatg$ at $\bv^\delta$.)
Thus under the assumption that the Jacobian of $\bff$ is bounded, the moment 
mismatch $\vhat(\alphahatgv) - \bv$ can be used to estimate
$\bff_\gamma(\bv) - \bff(\bv)$.
If the $\cO(\delta)$-accuracy in \thmref{acc} holds uniformly for all
$\bv \in \cR$, then we can see $\bff_\gamma$ as way to approximately, 
but accurately evaluate $\bff$.
The collision term $\br_\gamma$ can be considered similarly.

\subsection{Accuracy of the moment regularization map}

With the help of the relationships in \secref{mult-mom}, we can now analyze the 
moment regularization map $\vhat \circ \alphahatg$ directly.

\begin{thm}\label{thm:acc-new-no-tau}
Let
\begin{align}\label{eq:RM}
 \bv \in \cR^M := \{ \bv : \|\alphahatv\| < M \}, 
\end{align}
and let $\bv^\delta$ 
satisfy
\begin{align}\label{eq:udelta}
 \|\bv^\delta - \bv\| \le \delta.
\end{align}
Then
\begin{align}\label{eq:proj-error}
 \|\vhat(\alphahatg(\bv^\delta)) - \bv\| \le \delta + M\gamma.
\end{align}
\end{thm}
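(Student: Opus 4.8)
\medskip
\noindent\textbf{Proof plan.}
The strategy is to rewrite the moment regularization map $\Phi := \vhat \circ \alphahatg$ so that the data-perturbation error (controlled by $\delta$) and the regularization error (controlled by $\gamma$) decouple, and then to bound each with the triangle inequality. The key observation is that \eqref{eq:reg-dual-grad} (equivalently \eqref{eq:vhatg}) displays $\Phi$ as a perturbation of the identity: since $\alphahatg$ inverts $\vhatg = \vhat + \gamma\,\mathrm{id}$, evaluating \eqref{eq:reg-dual-grad} at $\bv^\delta$ and at $\bv$ gives $\vhat(\alphahatg(\bv^\delta)) = \bv^\delta - \gamma\,\alphahatg(\bv^\delta)$ and $\vhat(\alphahatgv) = \bv - \gamma\,\alphahatgv$, hence the decomposition
\begin{equation*}
 \vhat(\alphahatg(\bv^\delta)) - \bv
  \;=\; \bigl(\Phi(\bv^\delta) - \Phi(\bv)\bigr) \;+\; \bigl(\Phi(\bv) - \bv\bigr),
  \qquad \Phi(\bv) - \bv = -\gamma\,\alphahatgv .
\end{equation*}
It then suffices to bound the first bracket by $\delta$ and the second by $M\gamma$.

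For the first bracket I would show that $\Phi$ is nonexpansive. Since $\Phi(\bv) = \bv - \gamma\,\alphahatgv = \bv - \gamma\,\hg'(\bv)$, its Jacobian is $D\Phi = I - \gamma\,\hg'' = I - \gamma\,H_\gamma^{-1}(\alphahatgv)$ by \eqref{eq:hess-inv-g}. Writing $H_\gamma = P + \gamma I$ with $P := H_\gamma - \gamma I \succeq 0$ symmetric (recall \eqref{eq:hess-g}), a one-line manipulation gives $D\Phi = P\,(P + \gamma I)^{-1}$, which is symmetric with eigenvalues $\lambda/(\lambda + \gamma) \in [0,1)$, where $\lambda \ge 0$ ranges over the eigenvalues of $P$. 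Hence $D\Phi$ has operator norm at most $1$ at every point, and the mean-value inequality together with \eqref{eq:udelta} gives $\|\Phi(\bv^\delta) - \Phi(\bv)\| \le \delta$. (Since $\alphahatg = \hg'$ is the gradient of the convex, $\gamma^{-1}$-smooth function $\hg$ — the smoothness following from $H_\gamma \succeq \gamma I$, i.e.\ $\hg'' \preceq \gamma^{-1} I$ — this step is also immediate from the standard co-coercivity estimate, which avoids any regularity assumption on $\alphahatg$ along the segment from $\bv$ to $\bv^\delta$.)

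For the second bracket I would prove the shrinkage bound $\|\alphahatgv\| \le \|\alphahatv\|$. From $\vhat(\alphahatv) = \bv$ (the inverse relation behind \eqref{eq:dual-grad}) and $\vhat(\alphahatgv) = \bv - \gamma\,\alphahatgv$ we obtain $\vhat(\alphahatgv) - \vhat(\alphahatv) = -\gamma\,\alphahatgv$; taking the inner product with $\alphahatgv - \alphahatv$ and using that $\vhat = \hd'$ is monotone (by convexity of the entropy potential $\hd$) yields $-\gamma\,\alphahatgv \cdot (\alphahatgv - \alphahatv) \ge 0$, so $\|\alphahatgv\|^2 \le \alphahatgv \cdot \alphahatv \le \|\alphahatgv\|\,\|\alphahatv\|$. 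Because $\bv \in \cR^M$ means $\|\alphahatv\| < M$, this gives $\|\Phi(\bv) - \bv\| = \gamma\,\|\alphahatgv\| \le \gamma\,\|\alphahatv\| < M\gamma$. Adding the two estimates proves $\|\vhat(\alphahatg(\bv^\delta)) - \bv\| \le \delta + M\gamma$, which is \eqref{eq:proj-error}.

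The main obstacle is the nonexpansiveness of $\Phi$: it is precisely the bound $\lambda/(\lambda + \gamma) < 1$ on the spectrum of $D\Phi = P(P + \gamma I)^{-1}$ that keeps the $\delta$-error from being amplified by a $\gamma$-dependent factor, so that the two error sources add rather than compound. The shrinkage bound is a short monotonicity argument, and the decoupling identity $\Phi = \mathrm{id} - \gamma\,\alphahatg$ is the conceptual step that makes the clean $\delta + M\gamma$ split possible. (Throughout, as in the rest of this section, I assume $\alphahatg$ — and hence the left-hand side of \eqref{eq:proj-error} — is well defined at $\bv^\delta$.)
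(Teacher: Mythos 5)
Your proof is correct, and it reaches the same bound $\delta + M\gamma$ by a slightly different route than the paper. Both arguments hinge on the same key fact — that the Jacobian of $\vhat\circ\alphahatg$ is $H(H+\gamma I)^{-1}$ with operator norm at most $1$, so the map is nonexpansive — but they organize the error split differently. The paper avoids any bound on $\|\alphahatgv\|$ by telescoping through the shifted base point $\vtilde := \vhatg(\alphahatv) = \bv + \gamma\alphahatv$, for which $\vhat(\alphahatg(\vtilde)) = \bv$ exactly; a single application of nonexpansiveness over the segment from $\vtilde$ to $\bv^\delta$ then yields $\|\bv^\delta - \vtilde\| \le \delta + \gamma\|\alphahatv\| \le \delta + M\gamma$ directly. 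You instead split at $\bv$ itself, which costs you the extra shrinkage lemma $\|\alphahatgv\| \le \|\alphahatv\|$; your proof of it via monotonicity of $\vhat = \hd'$ is correct and is in fact a cleaner, self-contained substitute for the differential argument \eqref{eq:mult-dec-with-gam} that the paper only deploys in the following theorem (where it also needs continuity of $\alphahatg$ at $\gamma = 0$). Your co-coercivity remark is a genuine improvement in rigor: the paper's integral form of the mean-value theorem tacitly assumes differentiability of $\alphahatg$ along the segment, which the Baillon--Haddad route does not require. Net effect: your argument is marginally longer but proves a reusable auxiliary fact and rests on weaker regularity assumptions; the paper's is shorter because the choice of $\vtilde$ makes the $M\gamma$ term appear for free.
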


\begin{proof}
Let $\vtilde := \vhatg(\alphahatv)$. Then $\bv = \vhat(\alphahatg(\vtilde))$ so 
that
\begin{subequations}\label{eq:udiff-int}
\begin{align}
 \left\|\vhat(\alphahatg(\bv^\delta)) - \bv \right\|
  &= \left\|\vhat(\alphahatg(\bv^\delta)) - \vhat(\alphahatg(\vtilde))
   \right\| \\
  &= \left\|\int_0^1 \left.
   \frac{\partial (\vhat \circ \alphahatg)}{\partial \bv}
   \right|_{\vtilde + s (\bv^\delta - \vtilde)} (\bv^\delta - \vtilde) \intds
   \right\| \\
  &= \left\|\int_0^1 \left. H(\alphahatg)(H(\alphahatg) + \gamma I)^{-1}
   \right|_{\vtilde + s (\bv^\delta - \vtilde)} (\bv^\delta - \vtilde) \intds
   \right\| \\
  &\le \int_0^1 \left\| \left. 
   H(\alphahatg)(H(\alphahatg) + \gamma I)^{-1}
   \right|_{\vtilde + s (\bv^\delta - \vtilde)} \right\| \intds
   \|\bv^\delta - \vtilde\| \\
  &= \int_0^1 \left. 
   \frac{\| H(\alphahatg) \|}{\| H(\alphahatg) \| + \gamma}
   \right|_{\vtilde + s (\bv^\delta - \vtilde)} \intds
   \|\bv^\delta - \vtilde\| \\
  &\le \|\bv^\delta - \vtilde\|.
\end{align}
\end{subequations}
The inverse relationship between $\vhat$ and $\alphahat$, along with 
\eqref{eq:vhatg}, gives
\begin{align}
\label{eq:utilde-alpha}
\bv^\delta - \vtilde 
	= (\bv^\delta - \bv) + (\bv - \vtilde )
	&\stackrel{\hphantom{\eqref{eq:vhatg}}}{=} (\bv^\delta - \bv)
	 + (\vhat(\alphahatv) - \vhatg(\alphahatv)) \nonumber \\
	&\stackrel{\eqref{eq:vhatg}}{=} (\bv^\delta - \bv) -  \gamma \alphahatv).
\end{align}
Altogether we have
\begin{align}
 \| \vhat(\alphahatg(\bv^\delta)) - \bv \| 
  \stackrel{\eqref{eq:udiff-int}}{\le} \|\bv^\delta - \vtilde\| 
  \stackrel{\eqref{eq:utilde-alpha}}{\le} 
   \|\bv^\delta - \bv \| + \gamma \|\alphahatv\| 
  &\stackrel{\eqref{eq:udelta}}{\le} \delta + \gamma \|\alphahatv\| \nonumber \\
  &\stackrel{\eqref{eq:RM}}{\le} \delta + M\gamma.
\end{align}
\ 
\end{proof}

As a result of Theorem \ref{thm:acc}, if $\gamma \le C \delta$ for some
$C \in (0, \infty)$, then the moment regularization error 
$\|\vhat(\alphahatg(\bv^\delta)) - \bv\|$ is $\cO(\delta)$.  This result gives 
uniform accuracy over a large set of moment vectors, but only
by bounding this set away from the boundary of the realizable set by 
controlling the norm of the associated multiplier vectors.
The closer the moment vectors get to the boundary of the realizable 
set, the larger the constant in the error bound becomes.

\subsection{Stopping criterion for the optimization}
\label{sec:stopping}

As in \thmref{acc}, we would like to use an approximate solution to the 
optimization problem while keeping $\cO(\delta)$ accuracy.
The error is quantified by the value of the primal objective 
function, but previous work with entropy-based moment methods has used 
the norm of the dual gradient for the stopping criterion (e.g., 
\cite{AllHau12}).
The norm of the dual gradient is preferable because it is already computed 
by the optimizer (as part of the search-direction computation) and is 
easy to interpret.
We will show that these two stopping criteria are closely related, but first we 
give our result with the gradient-based criterion.

\begin{thm}
Assume $\bv \in \cR^M$; let $\bv^\delta$ satisfy
$\|\bv^\delta - \bv\| \le \delta$; and let $\bsalpha$ satisfy
\begin{align}\label{eq:stopping}
 \| \vhat(\bsalpha) - \bv^\delta + \gamma \bsalpha \| \le \tau.
\end{align}
Then
\begin{align}\label{eq:proj-error-tau}
 \|\vhat(\bsalpha) - \bv\| \le 2\delta + M\gamma + 2\tau.
\end{align}
\end{thm}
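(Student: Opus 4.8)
The plan is to recast the gradient stopping criterion \eqref{eq:stopping} so that \thmref{acc-new-no-tau} applies almost verbatim. First I would use the identity \eqref{eq:vhatg}, $\vhat(\bsalpha) + \gamma\bsalpha = \vhatg(\bsalpha)$, which shows that the left-hand side of \eqref{eq:stopping} is exactly $\|\vhatg(\bsalpha) - \bv^\delta\|$; thus the hypothesis reads $\|\vhatg(\bsalpha) - \bv^\delta\| \le \tau$. Since $\vhatg$ and $\alphahatg$ are mutual inverses (the discussion following \eqref{eq:vhatg}), and since \eqref{eq:stopping} tacitly assumes $\vhat(\bsalpha)$ is well defined (i.e.\ $\bsalpha \in \cA$, so that $\vhatg(\bsalpha)$ lies in the range of $\vhatg$), we have $\bsalpha = \alphahatg(\vhatg(\bsalpha))$, hence $\vhat(\bsalpha) = \vhat\bigl(\alphahatg(\vhatg(\bsalpha))\bigr)$ is precisely the moment regularization map $\vhat \circ \alphahatg$ evaluated at $\vhatg(\bsalpha)$.

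Next I would measure how far $\vhatg(\bsalpha)$ lies from the realizable vector $\bv$: by the triangle inequality and the two hypotheses, $\|\vhatg(\bsalpha) - \bv\| \le \|\vhatg(\bsalpha) - \bv^\delta\| + \|\bv^\delta - \bv\| \le \tau + \delta$. So $\vhatg(\bsalpha)$ is a perturbation of $\bv \in \cR^M$ at level $\tau + \delta$, and applying \thmref{acc-new-no-tau} with $\vhatg(\bsalpha)$ in the role of ``$\bv^\delta$'' and $\tau + \delta$ in the role of ``$\delta$'' gives $\|\vhat(\alphahatg(\vhatg(\bsalpha))) - \bv\| \le (\tau+\delta) + M\gamma$. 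By the first step the left-hand side equals $\|\vhat(\bsalpha) - \bv\|$, so $\|\vhat(\bsalpha) - \bv\| \le \delta + M\gamma + \tau \le 2\delta + M\gamma + 2\tau$, which is \eqref{eq:proj-error-tau}. If one prefers not to cite \thmref{acc-new-no-tau} as a black box, the same conclusion follows by rerunning the integral-of-Jacobian estimate \eqref{eq:udiff-int}, whose only ingredient is $\|H(H+\gamma I)^{-1}\| \le 1$: this yields $\|\vhat(\bsalpha) - \vhat(\alphahatg(\bv^\delta))\| \le \|\vhatg(\bsalpha) - \bv^\delta\| \le \tau$, which one combines with the bound $\|\vhat(\alphahatg(\bv^\delta)) - \bv\| \le \delta + M\gamma$ from the proof of \thmref{acc-new-no-tau} via the triangle inequality.

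I do not expect a genuine obstacle here; the argument is short bookkeeping with the duality relations of \secref{mult-mom}, and it is considerably shorter than the proof of \thmref{acc-new-no-tau}. The only point requiring a line of care is that $\alphahatg$ be defined at $\vhatg(\bsalpha)$ — and, in the self-contained variant, that $\vhat \circ \alphahatg$ be defined along the segments appearing in the Jacobian integrals — which is ensured by the standing assumption of this section that the regularized primal \eqref{eq:tik-primal} attains its minimum, together with the tacit hypothesis $\bsalpha \in \cA$.
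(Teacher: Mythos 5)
Your proof is correct, and it takes a genuinely different route from the paper's. The paper works directly with the decomposition $\vhat(\bsalpha) - \bv = (\vhat(\bsalpha) - \bv^\delta + \gamma\bsalpha) + (\bv^\delta - \bv) - \gamma\bsalpha$, which after the triangle inequality leaves the term $\gamma\|\bsalpha\|$ to be controlled; this is done by writing $\bsalpha - \alphahatgv$ as an integral of $(H + \gamma I)^{-1}$ (producing the bound $\|\bsalpha\| \le \|\alphahatgv\| + (\tau+\delta)/\gamma$) and then proving separately that $\|\alphahatgv\|$ is nonincreasing in $\gamma$, so that $\|\alphahatgv\| \le \|\alphahatv\| \le M$. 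You instead observe that the stopping criterion is exactly $\|\vhatg(\bsalpha) - \bv^\delta\| \le \tau$, so that $\vhatg(\bsalpha)$ is a perturbation of $\bv$ at level $\tau + \delta$, and then feed it back into \thmref{acc-new-no-tau} using $\bsalpha = \alphahatg(\vhatg(\bsalpha))$. This buys two things: it avoids the monotonicity computation \eqref{eq:mult-dec-with-gam} and the attendant continuity argument at $\gamma = 0$ entirely, and it yields the sharper bound $\|\vhat(\bsalpha) - \bv\| \le \delta + \tau + M\gamma$, of which the stated \eqref{eq:proj-error-tau} is a weakening. The only ingredients you reuse are the ones the paper has already established — that $\vhatg = \vhat + \gamma\,\mathrm{id}$ is the inverse of $\alphahatg$ and that the Jacobian $H(H+\gamma I)^{-1}$ of $\vhat \circ \alphahatg$ has norm at most one — and you correctly flag the one point of care (that $\bsalpha \in \cA$ so the inverse relation applies), which the paper's own proof tacitly assumes as well when it writes $\bsalpha = \alphahatg(\vtilde)$.
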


\begin{proof}
We write 
\begin{equation}
\vhat(\bsalpha) - \bv 
	= \vhat(\bsalpha) - \bv^\delta + \gamma \bsalpha
	+ (\bv^\delta - \bv)
	- \gamma \bsalpha.
\end{equation}
and apply the triangle inequality, using \eqref{eq:stopping}, to find
\begin{equation}
\label{eq:thm4-proof-triangle} 
\| \vhat(\bsalpha) - \bv \| 
	\leq \tau + \delta + \gamma \| \bsalpha \|.
\end{equation}
To bound $\bsalpha$, let
$\vtilde := \vhatg(\bsalpha) = \vhat(\bsalpha) + \gamma \bsalpha$.
Then
\begin{align}
\label{eq:vtilde_bound}
 \| \vtilde - \bv \| \le \| \vtilde - \bv^\delta \| + \| \bv^\delta - \bv \|
  \stackrel{\eqref{eq:stopping}}{\le} \tau + \delta
\end{align}
and, since
$\bsalpha = \alphahatg(\vtilde)$, 
\begin{align}
 \bsalpha = \alphahatgv +
  \int_0^1 (H(\alphahatg(\bv + s (\vtilde - \bv))) + \gamma I)^{-1}
  (\vtilde - \bv) \intds.
\end{align}
Thus $\|\bsalpha\|$ is bounded by
\begin{align}\label{eq:alphagdt-bnd-g}
 \| \bsalpha \| \le
  \| \alphahatgv \| + \frac1{\gamma} \| \vtilde - \bv \|
  \stackrel{\eqref{eq:vtilde_bound}}{\le} \| \alphahatgv \|
  + \frac{\tau+\delta}{\gamma}.
\end{align}
The term $\| \alphahatgv \|$ can be further bounded because $\|\alphahatgv\|$ 
is a decreasing function of $\gamma$:
\begin{subequations}\label{eq:mult-dec-with-gam}
\begin{align}
 \frac{\partial}{\partial \gamma}\|\alphahatgv\|^2
  &= 2 \alphahatgv \cdot \frac{\partial}{\partial \gamma} \alphahatgv \\
  &= 2 \alphahatgv \cdot \left( - \left.
   \frac{\partial \vhatg}{\partial \bsalpha} \right|_{\alphahatgv}
   \left. \frac{\partial \vhatg}{\partial \gamma} \right|_{\alphahatgv}
   \right) \\
  &= -2 \alphahatgv \cdot ((H_\gamma^{-1}(\alphahatgv) \alphahatgv) \\
  &\le 0,
\end{align}
\end{subequations}
The derivative of $\alphahatg$ with respect to $\gamma$ is computed by 
differentiating both sides of $\vhatg(\alphahatgv) = \bv$ with respect to 
$\gamma$, as in the implicit function theorem.
Since the continuity of $\alphahatg$ with respect to $\gamma$ at
$\gamma = 0$ is a consequence of the same continuity of $\vhatg$, we can extend 
\eqref{eq:alphagdt-bnd-g} to
\begin{align}\label{eq:alphagdt-bnd}
 \| \bsalpha \| \le \| \alphahatv \| + \frac{\tau+\delta}{\gamma}
  \le M + \frac{\tau+\delta}{\gamma}.
\end{align}
Setting the bound \eqref{eq:alphagdt-bnd} into \eqref{eq:thm4-proof-triangle}
yields \eqref{eq:proj-error-tau}
\end{proof}

If $\gamma \le C\delta$ and $\tau \le C'\delta$, then the error 
$\|\vhat(\bsalpha) - \bv\|$ of the approximate projection is $\cO(\delta)$.
Thus we achieve a bound like that of \thmref{acc} but with 
constants independent of the specific moment vectors $\bv$ and $\bv^\delta$, as
long as $\bv \in \cR^M$.

We now turn to the relationship between the stopping criterion 
\eqref{eq:stopping} and that of
\cite{engl1989convergence,engl1993convergence}.
In the latter, a distribution $g$ is called \emph{$\tau'$-optimal} if,
for a given tolerance $\tau' \in (0, \infty)$, it satisfies
\begin{align}\label{eq:tau-opt-tik}
 \cH(g) + \frac1{2\gamma} \left\| \Vint{\bm g} - \bv \right\|^2
  \le \hgv + \tau',
\end{align}
where $\hgv$ is the infimum of $\cH_\gamma(\cdot; \bv)$; see
\eqref{eq:hg}.
Because $\hgv$ is typically unknown, we cannot practically enforce 
\eqref{eq:tau-opt-tik} as is.
However, we find a computable and stronger criterion by considering the duality 
gap \cite[\S 5.5.1]{boyd2004convex}.
Indeed, for any $\bsalpha$ we have
\eqref{eq:reg-dual-grad}
\begin{equation}
  \bsalpha \cdot \bv - \Vint{\eta_*(\bsalpha \cdot \bm)}
   - \frac{\gamma}2 \| \bsalpha \|^2
  \le \hgv;
\end{equation}
so if the multiplier vector $\bsalpha$ further satisfies
\begin{equation}
\label{stopping-criteria-inequality}
\cH(\Ga)
 + \frac1{2\gamma} \| \Vint{\bm \Ga} - \bv \|^2
  \le \bsalpha \cdot \bv - \Vint{\eta_*(\bsalpha \cdot \bm)}
   - \frac\gamma2 \| \bsalpha \|^2 + \tau',
\end{equation}
we can conclude that it satisfies \eqref{eq:tau-opt-tik}.
Since the optimal duality gap is zero, \eqref{stopping-criteria-inequality} can 
be achieved for any $\tau' > 0$.

Now, the form of $\Ga$ and the fact that $\eta$ and $\etad$ are Legendre duals 
imply that  
\begin{align}
\label{eq:stopping-criteria-legendre}
 \eta(\Ga) + \eta_*(\bsalpha \cdot \bm)
  &= \bsalpha \cdot \bm \Ga.
\end{align}
This relation reduces \eqref{stopping-criteria-inequality} to
\begin{subequations}
\begin{align}
  \frac1{2\gamma} \| \Vint{\bm \Ga} - \bv
   + \gamma\bsalpha \|^2 \le \tau'.
\end{align}
\end{subequations}
which gives a stopping criterion equivalent to \eqref{eq:stopping}, where the 
tolerances are related by $\tau = \sqrt{2\gamma\tau'}$.

\subsection{Accuracy tests}\label{sec:static-tests}

To verify accuracy numerically, we consider the following curve in the
realizable set:
\begin{equation}\label{eq:static-moment-curve}
\bu(x) := \Vint{\bm \exp(\alpha_0(x) + \alpha_1(x) \mu)},
\end{equation}
where $x \in [-\pi, \pi]$ and
\begin{subequations}
	\begin{align}
	\alpha_0(x) := - \sin(x) + c \quand 
	\alpha_1(x) :=  K + \sin(x).
	\end{align}
\end{subequations}
The parameter $K$ is used to move the moment curve closer to the boundary of 
the realizable set.
The constant $c$ is set to
\begin{align}
c &= \log\left( \cfrac{K - 1}{2\sinh(K - 1)} \right) - 1
\end{align}
so that $1 = \max_x u_0(x)$.

We generate an error-contaminated moment vector $\bv^\delta$ 
by projecting the moment curve on the interval $[0, \dx]$ onto the space of 
polynomials up to degree $k - 1$.
We let $\bu_{\dx}(0)$ denote the evaluation at $x = 0$ of the $(k - 1)$-th 
degree polynomial projection of $\bu(x)$ given in 
\eqref{eq:static-moment-curve} on $[0, \dx]$.
We choose the edge $x = 0$ simply because it would appear in a finite-volume 
method.

The velocity space is $V = [-1, 1]$ (as in the numerical tests in 
\secref{numerics} below), and for the basis functions $\bm$ we take the 
Legendre polynomials up to seventh order.
We compute the velocity integrals in \eqref{eq:static-moment-curve} using a 
forty-point Gauss--Lobatto quadrature and the spatial inner products for the 
orthogonal projection with a twenty-point Gauss--Lobatto quadrature.

In \tabref{static}, we plot the error
\begin{align}
\| \vhat(\bsalpha_\gamma^\tau(\bu_{\dx}(0))) - \bu(0) \|,
\end{align}
where $\bsalpha_\gamma^\tau(\bu_{\dx}(0)))$ denotes the first Newton iterate 
satisfying the stopping criterion \eqref{eq:stopping} for the moment vector
$\bu_{\dx}(0)$.
The test results confirm that the appropriate choices of $\gamma$ and $\tau$ 
give the expected orders of convergence.
We note that almost all of the moment vectors $\bu_{\dx}(0)$ generated by the 
polynomial projections for the table are not realizable.

\begin{table}
	\small
	\centering
	\begin{tabular}{rrrrrrr}
		
		& \multicolumn{2}{c}{$k = 2$}
		& \multicolumn{2}{c}{$k = 3$}
		& \multicolumn{2}{c}{$k = 4$} \\
		
		\cmidrule(r){2-3} \cmidrule(r){4-5}   \cmidrule(r){6-7}
		
		$\dx$ & $E^1_h$ & $\nu$
		& $E^1_h$ & $\nu$
		& $E^1_h$ & $\nu$ \\ \midrule
		
$2^{-1}$ & 5.7e-01 &       & 4.2e-01 &       & 2.8e-01 &      \\
$2^{-2}$ & 2.8e-01 & 0.99  & 6.9e-02 & 2.62  & 1.5e-02 & 4.23 \\
$2^{-3}$ & 6.9e-02 & 2.05  & 1.0e-02 & 2.77  & 1.3e-03 & 3.55 \\
$2^{-4}$ & 1.5e-02 & 2.19  & 1.3e-03 & 2.97  & 7.5e-05 & 4.10 \\
$2^{-5}$ & 4.5e-03 & 1.75  & 1.4e-04 & 3.18  & 4.8e-06 & 3.95 \\
$2^{-6}$ & 1.3e-03 & 1.81  & 2.0e-05 & 2.83  & 3.1e-07 & 3.97 \\
$2^{-7}$ & 3.2e-04 & 1.99  & 2.5e-06 & 3.02  & 1.9e-08 & 4.03 \\
$2^{-8}$ & 7.5e-05 & 2.11  & 3.1e-07 & 3.00  & 1.1e-09 & 4.05 \\
$2^{-9}$ & 2.0e-05 & 1.90  & 3.8e-08 & 3.03  & 7.3e-11 & 3.97
		
	\end{tabular}
	\caption{Regularization errors for $K = 200$ and $\gamma = \tau = 
\dx^k$.}
	\label{tab:static}
\end{table}

\section{Numerical results}
\label{sec:numerics}

In this section we demonstrate that \eqref{eq:reg-mn} can be simulated
using an off-the-shelf, high-order method for hyperbolic 
conservation laws.
Our simulations indicate that the results of \secref{acc} can be used to guide 
the choice of the regularization parameter $\gamma$ and the optimization 
tolerance $\tau$ so that a numerical solution of \eqref{eq:reg-mn} is an 
accurate solution of the original entropy-based moment system \eqref{eq:mn}.
We also present numerical simulations of a benchmark problem.

For numerical tests, we consider a kinetic equation that describes particles of 
unit speed moving through a material with slab geometry (see e.g., 
\cite{Lewis-Miller-1984}):
\begin{align}\label{eq:slab}
 \partial_t f + \mu \partial_x f + \sig{a} f = \sig{s}(\Vint{f} - f) + S.
\end{align}
The spatial domain is $X = (\xL, \xR)$ is one-dimensional, and the velocity 
variable $\mu \in [-1, 1]$ gives the cosine of the angle between the 
microscopic velocity and and the $x$-axis.
The collision operator here is $\cC(f) := \sig{s}(\vint{f} - f)$, where
$\sig{s} \ge 0$ is the \emph{scattering cross section}.
This collision operator $\cC$ represents isotropic scattering, is linear, and 
dissipates any convex entropy $\eta$.
Our equation also has a loss term $\sig{a} f$, where $\sig{a} \ge 0$ is  
the \emph{absorption cross section}, as well as a source $S = S(t, x, \mu)$.
Equation \eqref{eq:slab} is supplemented with the initial conditions
\begin{align}
 f(0, x, \mu) = f_0(x, \mu)
\end{align}
and boundary conditions
\begin{equation}
 f(t, \xL, \mu >0) = f_{\rm L}(t, \mu) \qquand 
f(t, \xR, \mu < 0) = f_{\rm R}(t, \mu) .
\end{equation}

The original entropy-based moment equations for \eqref{eq:slab} are
\begin{align}\label{eq:mn-slab}
 \partial_t \bu + \partial_x \bff(\bu) + \sig{a} \bu
  = \sig{s}R \bu + \bs,
\end{align}
where $R = \diag\{0, -1, \dots , -1\}$ and $\bs := \vint{\bm S}$.
The regularized entropy-based moment equations for \eqref{eq:slab} are
\begin{align}\label{eq:rmn-slab}
 \partial_t \bu + \partial_x \bff_\gamma(\bu) + \sig{a} \bu
  = \sig{s}R \vhat(\alphahatgu) + \bs.
\end{align}

\begin{remark}
To achieve the entropy-dissipation property described in 
\secref{rmn-structure}, 
we must use the regularized moment vector $\vhat(\alphahatgu)$ in the collision 
operator.
This makes the collision operator nonlinear.
The absorption term is not part of the collision operator and thus not part of 
the entropy-dissipating structure of the kinetic equation \eqref{eq:slab}, and 
for that reason we simply leave it as a linear decay term in the regularized 
moment equations.
\end{remark}

The initial conditions are $\bu(0, x) = \Vint{\bm f_0(x, \cdot)}$ and
and to define the boundary conditions we extend the definitions of $f_{\rm L}$ 
and $f_{\rm R}$ from $\mu \in [0, 1]$ and $\mu \in [-1, 0]$ respectively to all 
$\mu \in [-1, 1]$ to get
\begin{align}
 \bu(t, \xL) = \bu_{\rm L}(t) := \Vint{\bm f_{\rm L}(t, \cdot)}
  \quand
 \bu(t, \xR) = \bu_{\rm R}(t) := \Vint{\bm f_{\rm R}(t, \cdot)}.
\end{align}
While this is not technically correct (and proper treatment of boundary 
conditions for moment methods remains an open problem), we only consider 
problems where the boundary conditions have at most a negligible effect on the 
solution.

For our numerical tests we take the Maxwell--Boltzmann entropy because it has 
generic physical relevance and leads to a nonnegative entropy ansatz.
We take the Legendre polynomials up to order $N$ for the basis functions in 
$\bm$, and so the number of moment components is $n = N + 1$.

\subsection{Numerical method}
\label{sec:dg}

Two common high-order methods for hyperbolic equations are the 
discontinuous-Galerkin (DG) \cite{CKS2000,CockburnShuIII} and
weighted-essentially-nonoscillatory (WENO) \cite{Shu1998} methods.
The main consideration in selecting a method is the number of times one must 
compute multipliers $\alphahatgu$ via \eqref{eq:reg-mult}, since this is the 
most expensive part of the algorithm.
For the hyperbolic component, WENO  offers a more attractive choice, since it 
only requires multipliers at the cell edges (and cell means, if a 
characteristic transformation is performed) in order to compute fluxes.
A DG algorithm, on the other hand, needs multipliers on a 
quadrature set in each cell in order to evaluate the volume term.  
However, for the regularized equations \eqref{eq:rmn-slab} the collision term 
is 
nonlinear, and thus both the WENO and DG methods must approximately integrate 
this term in space with a quadrature, and each quadrature evaluation requires
knowledge of the multipliers. Thus the advantages of WENO over DG are lost.  We 
therefore proceed with a DG implementation, a description of which (in the 
context of solving \eqref{eq:mn-slab}) can be found in 
\cite{AlldredgeSchneider2014}.
The implementation here is essentially the same, except that a realizability 
limiter is not needed.

As in \cite{AlldredgeSchneider2014} we use the Lax-Friedrichs numerical flux.
The numerical dissipation constant is set to one because the eigenvalues of the 
flux $\bff_\gamma$ have the bound
\begin{align}
 \lambda_{\max}\left( \frac{\partial \bff_\gamma}{\partial \bv} \right) \le 1,
\end{align}
where $\lambda_{\max}$ denotes the maximum (in absolute value) eigenvalue.
This is a straightforward extension of \cite[Lemma 3.1]{AlldredgeSchneider2014}.
We use SSP Runge--Kutta methods for time integration, specifically those given 
in \cite{ketcheson2008highly}:
for the second-order results, we use the $s$-stage method with ten stages;
for the third-order results, we use the $r^2$-stage method with $r = 4$; and
for the fourth-order results, we use the ten-stage method.
We use a regular grid with $N_x$ spatial cells with width
$\dx = (\xR - \xL) / N_x$.
The DG basis consists of polynomials up to degree $k - 1$ on each cell.
We choose the time step as in \cite{AlldredgeSchneider2014}:
\begin{align}
 \dt = \frac{w_Q \dx}{1 + w_Q \dx (\sig{a} + \sig{s})},
\end{align}
where $w_Q$ is the weight of the endpoints of the $Q$-point Gauss-Lobatto 
quadrature with $2Q - 2 \ge k$.
While in \cite{AlldredgeSchneider2014} this time step was chosen in order to 
maintain realizability of the cell means, which is irrelevant to us, we found 
that trying to use smaller time steps quickly led to stability problems.

We solve dual optimization problem \eqref{eq:reg-mult} using a
Levenberg-Marquardt-type algorithm and the Armijo line search.

\subsection{Convergence test using a manufactured solution}

To test how accurately the numerical solution of \eqref{eq:rmn-slab} 
approximates the numerical solution of the original entropy-based moment
equations \eqref{eq:mn-slab}, we used the method of manufactured solutions, in 
particular the one proposed in \cite{AlldredgeSchneider2014}:
Let
\begin{align}\label{eq:man-soln-exact}
 \bw(t, x) :=  \Vint{\bm \exp(\alpha_0(t, x) + \alpha_1(t, x) \mu)},
\end{align}
where
\begin{align}
 \alpha_0(t, x) := - \sin(x - t) + 4 t + c
 \qquand 
 \alpha_1(t, x) :=  K + \sin(x - t).
\end{align}
As above, the parameter $K$ is used to move the moment curve closer to the 
boundary of the realizable set, and the constant $c$ is set to
\begin{align}
 c &= \log\left( \cfrac{K - 1}{2\sinh(K - 1)} \right) - 1 - 4\tf,
\end{align}
so that $1 = \max_{t, x} w_0(t, x)$.
The spatial domain is $X = (-\pi, \pi)$, and we take the final time
$\tf := \pi / 5$.
We use periodic boundary conditions and include neither scattering nor 
absorption: $\sig{a} = \sig{s} = 0$. Since the goal is to converge to the 
solution of the original entropy-based moment equations \eqref{eq:mn-slab}, we
compute the source $s$ for the manufactured solution using $\bff$ instead of
$\bff_\gamma$; that is, we set ${\bs = \partial_t \bw + \partial_x \bff(\bw)}$.

Error is measured in the $L^1$ sense:
Let $\bu_{\dx}(\tf, x)$ denote the point-wise evaluation of the DG solution at 
the final time; we consider the errors only in the zeroth component, which are
given by
\begin{align}
 e_{\dx} = \int_{\xL}^{\xR} |u_{\dx,0}(\tf, x) - w_0(\tf, x)| \intdx.
\end{align}
We approximate the integral with a twenty-point Gauss--Lobatto quadrature in 
each spatial cell.  Results are given in \tabref{manufactured}; 
we used a factor $10^{-1}$ in front of the $\dx^k$ for $\gamma$ and $\tau$ 
(unlike in \secref{static-tests}).
With this factor, we see the expected orders of convergence for different 
values of $k$.
For larger values of this factor, the observed convergence in our tests is 
slightly smaller than expected.

\begin{table}
\footnotesize
\centering
\begin{tabular}{rcrcrcr}

 & \multicolumn{2}{c}{$k = 2$}
 & \multicolumn{2}{c}{$k = 3$}
 & \multicolumn{2}{c}{$k = 4$} \\
 
\cmidrule(r){2-3} \cmidrule(r){4-5}   \cmidrule(r){6-7}

$N_x$ & $e_{\dx}$ & $\nu$ & $e_{\dx}$ & $\nu$ & $e_{\dx}$ & $\nu$ \\ \midrule

  10 & 1.7184e-01 &   --  & 6.9233e-02 &   --  & 9.3886e-03 &   --  \\
  20 & 1.1080e-01 &  0.63 & 6.6889e-03 &  3.37 & 2.9149e-04 &  5.01 \\
  40 & 2.9046e-02 &  1.93 & 1.2543e-03 &  2.41 & 4.6188e-05 &  2.66 \\
  80 & 7.6273e-03 &  1.93 & 1.7001e-04 &  2.88 & 5.9739e-06 &  2.95 \\
 160 & 2.2065e-03 &  1.79 & 2.3744e-05 &  2.84 & 4.7288e-07 &  3.66 \\
 320 & 5.5530e-04 &  1.99 & 3.0206e-06 &  2.97 & 3.0056e-08 &  3.98 \\
 640 & 1.4088e-04 &  1.98 & 3.8838e-07 &  2.96 & 1.9219e-09 &  3.97 \\
1280 & 3.6005e-05 &  1.97 & 4.8748e-08 &  2.99 & 1.1919e-10 &  4.01

\end{tabular}
\caption{Errors between numerical solutions of the regularized equations to 
the exact solution of the original equations for the manufactured-solution test.
Here, $N = 3$, $K = 5$, and we use the regularization and optimization 
parameters $\gamma = \tau = 10^{-1}\dx^k$.}
\label{tab:manufactured}
\end{table}

\subsection{Plane-source benchmark}
\label{sec:plane}

The plane-source problem \cite{ganapol1977generation} tests how well a method
handles strong spatial gradients and angular distributions with highly 
localized support.
We use a slightly smoothed version of this problem in which an initial delta
function in space is replaced by a narrow Gaussian.
Even with this smoothing, solutions are rough and numerical 
convergence is slow.

The domain is $X = (-1.2, 1.2)$, and the initial conditions are given by
\begin{align}
 f_{t = 0}(x, \mu) = \max\left(\frac{\exp\left(-x^2 / \Sigma^2 \right)} 
  {\Sigma}, f_{\rm floor}\right),
\end{align}
where $\Sigma = 0.01$, and $f_{\rm floor} = 0.5 \times 10^{-8}$ approximates a
vacuum.
(The ansatz with the Maxwell--Boltzmann entropy, which has the form 
$\exp(\bsalpha \cdot \bm)$, cannot be exactly zero.)
The boundary conditions
$f_{\rm L}(t, \mu) \equiv f_{\rm R}(t, \mu) \equiv f_{\rm floor}$ are 
consistent with the analytical solution.
We simulate the solution up to $\tf = 1$.

For the results in this section, we first found the smallest values of $\gamma$ 
and $\tau$ with which we could reliably compute numerical solutions of the 
regularized equations without the optimizer crashing.
These values were $\gamma = 10^{-6}$ and $\tau = 10^{-7}$.
Then with these values, we compute a very accurate, nearly converged 
numerical solution using the fourth-order DG method with 4000 spatial 
cells.
We compare this solution with a high-resolution solution of the original 
entropy-based moment equations, which we generate using the second-order 
kinetic scheme of \cite{AllHau12}.
We use 13000 cells with the kinetic scheme and even for the slightly 
smoothed version of the problem considered here, we do have to use the 
technique of isotropic regularization for some moment vectors in the numerical 
solution (see \cite{AllHau12} for details).

\figref{plane-kin-comp} shows the results for $N = 5$.
In this figure, the solutions are indistinguishable, but in 
Figures \ref{fig:plane-kin-comp-zoom-1} and \ref{fig:plane-kin-comp-zoom-2}, we 
zoom in on the solutions in two places to show that differences on the order of 
0.01, or about 1\%, remain.
We computed solutions for other values of $N$ and found similar results.

To get some understanding of the effect of the value of $\gamma$ on 
the solutions, we also present numerical solutions to the plane-source problem 
with two larger values of $\gamma$.
(We continue to use $\tau = 10^{-7}$ and the fourth-order DG method with 4000 
cells.)
In \figref{plane-gam-comp} we include the results using $\gamma = 10^{-2}$.
While for the larger value of $\gamma$ the first and third waves of the 
solution are larger in magnitude, the second wave is smaller and somewhat 
delayed.
The front of the third wave is also slightly delayed.
It seems to us that while increasing the value of $\gamma$ does not have a 
smoothing effect, it does seem to have a delaying effect like that predicted by 
the analysis of the regularized P$_N$ equations in \secref{reg-pn}.
The zoomed-in plots in Figures \ref{fig:plane-gam-comp-zoom-1} and
\ref{fig:plane-gam-comp-zoom-2} include a third, intermediate value of $\gamma$ 
which confirms this observation.

\begin{figure}
 \centering
 \begin{subfigure}[t]{0.45\textwidth}
  \includegraphics[width=\textwidth]{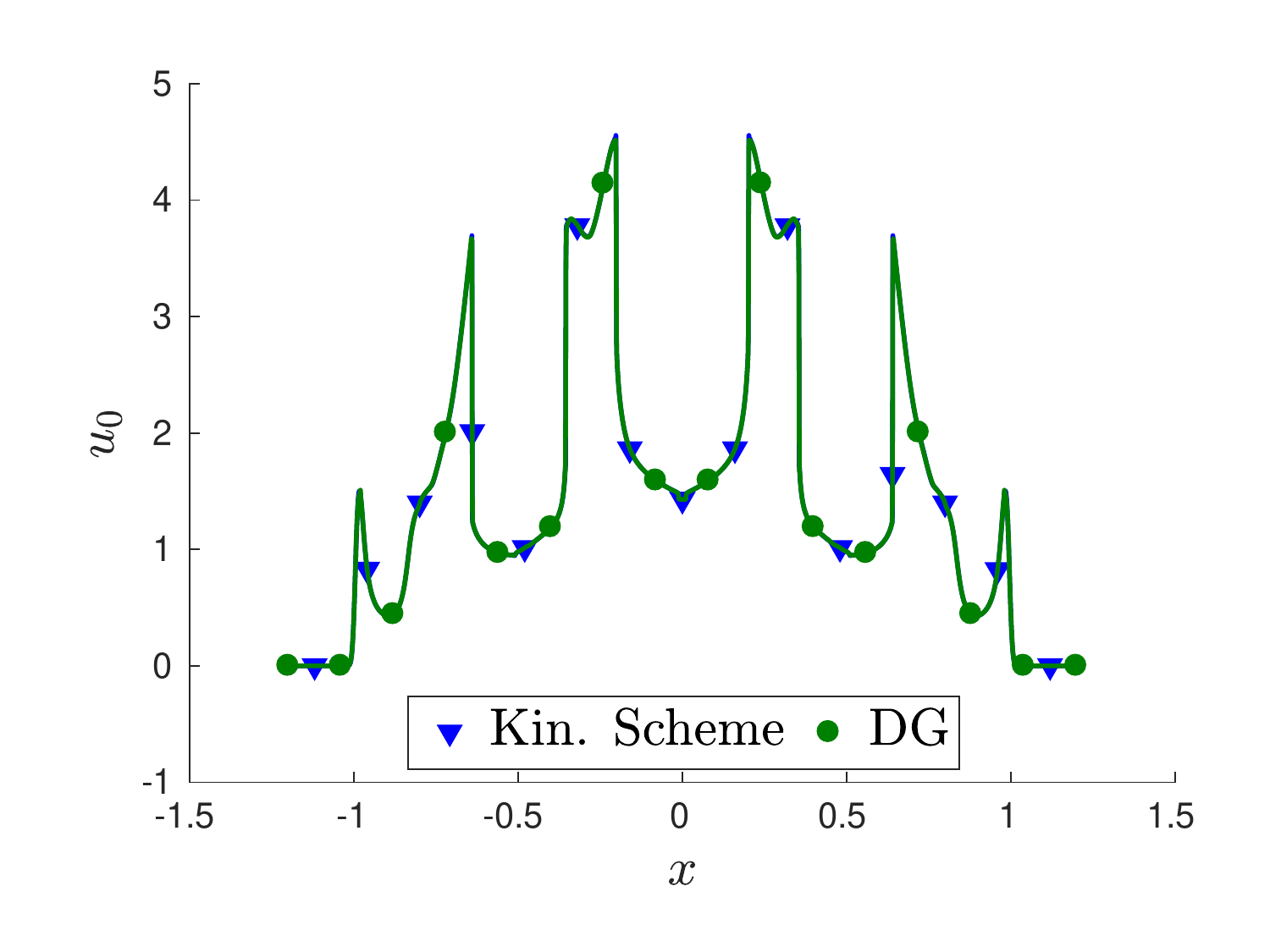}
  \caption{Plane-source solution using the kinetic scheme and the fourth-order 
   DG scheme with $\gamma = 10^{-6}$.}
  \label{fig:plane-kin-comp}
 \end{subfigure}
 \quad
 \addtocounter{subfigure}{2}
 \begin{subfigure}[t]{0.45\textwidth}
  \includegraphics[width=\textwidth]{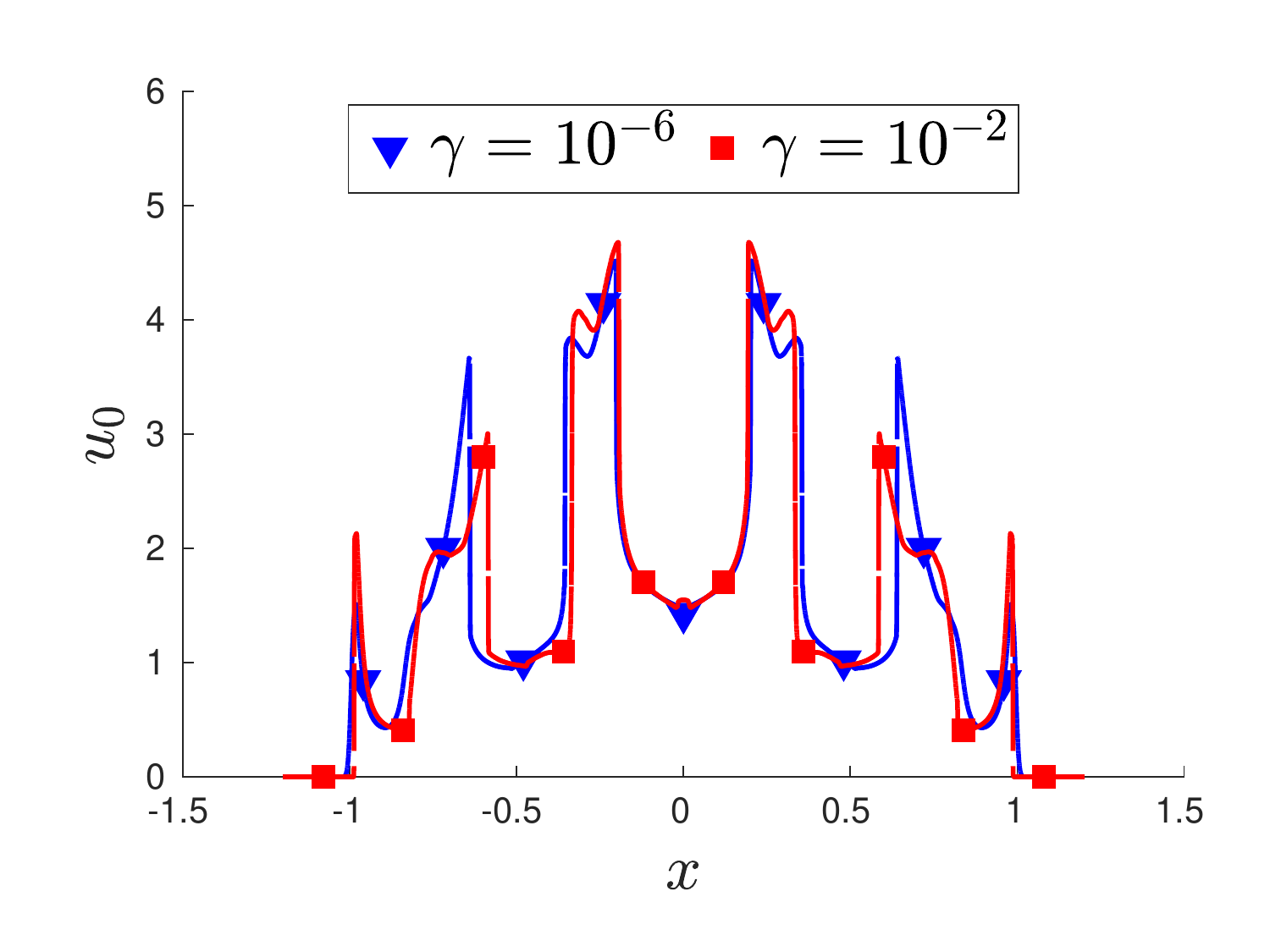}
  \caption{Plane-source solution using two different values of $\gamma$.}
  \label{fig:plane-gam-comp}
 \end{subfigure}
 \\
 \addtocounter{subfigure}{-3}
 \begin{subfigure}[t]{0.45\textwidth}
  \includegraphics[width=\textwidth]{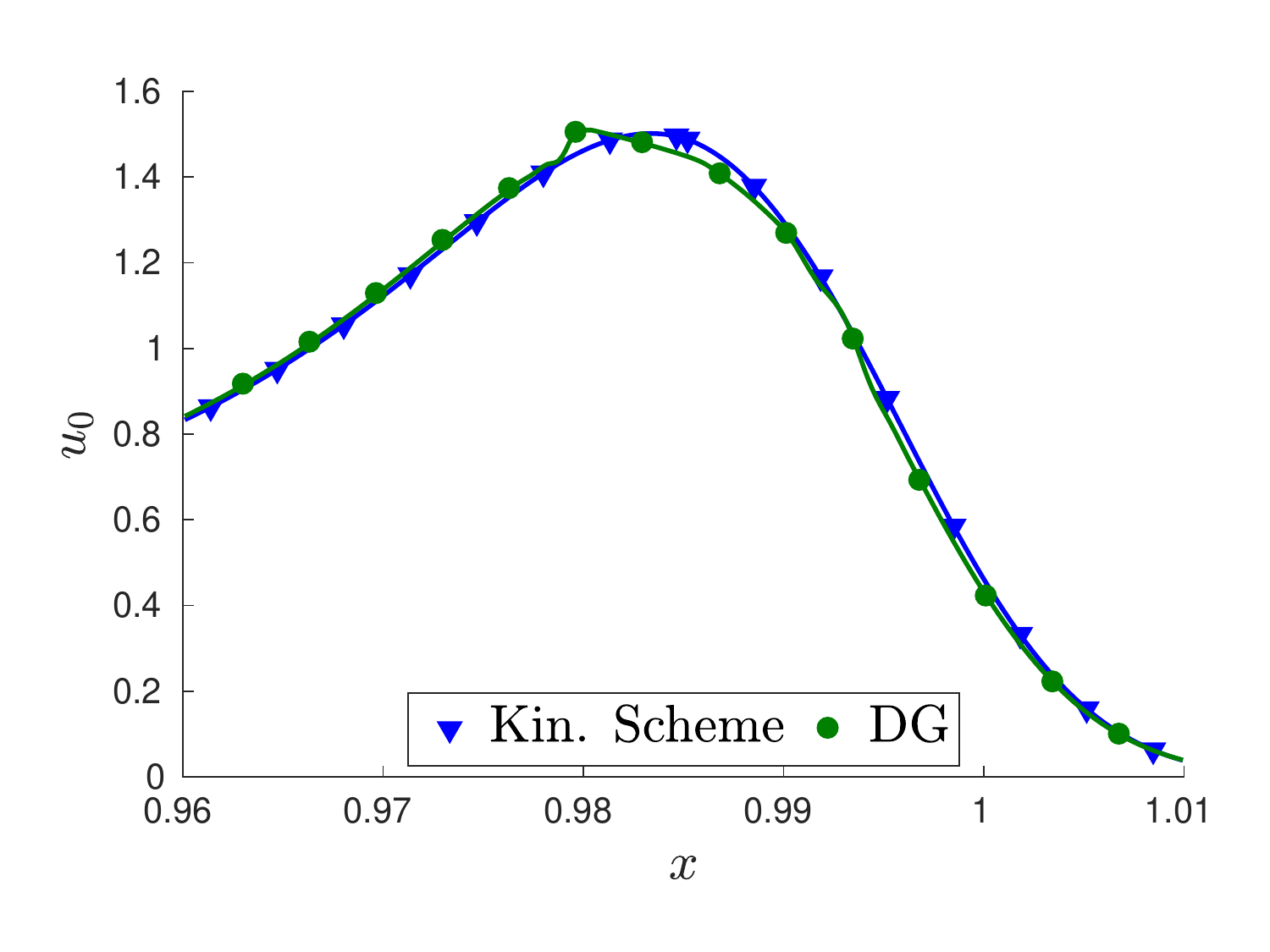}
  \caption{Zoom-in around $x \in [0.96, 1.01]$ of the comparison of the 
   solutions from the old kinetic scheme and the new regularized equations.}
  \label{fig:plane-kin-comp-zoom-1}
 \end{subfigure}
 \quad
 \addtocounter{subfigure}{2}
 \begin{subfigure}[t]{0.45\textwidth}
  \includegraphics[width=\textwidth]{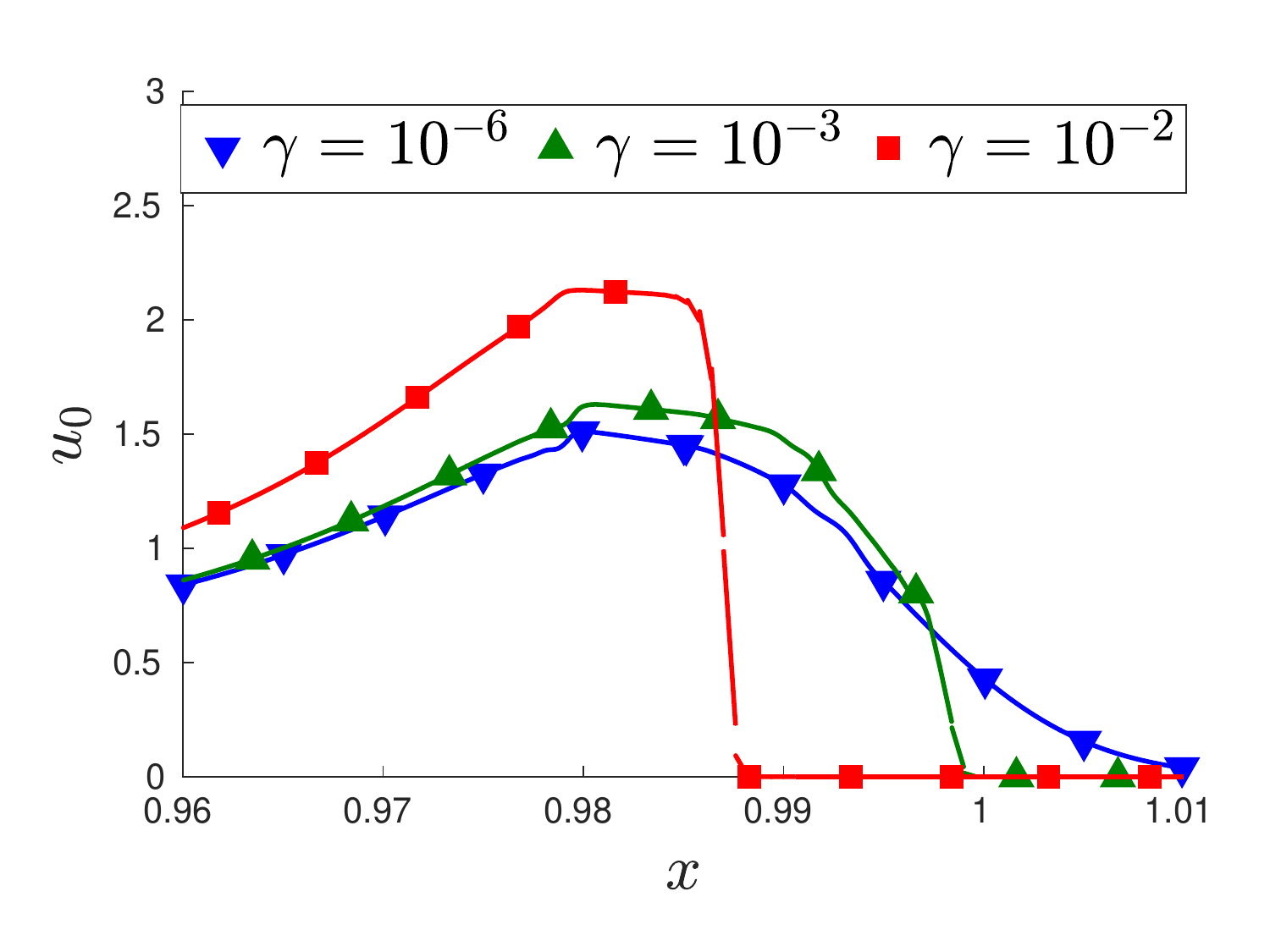}
  \caption{Zoom-in around $x \in [0.96, 1.01]$ of the comparison of the 
   solutions for different values of $\gamma$.}
  \label{fig:plane-gam-comp-zoom-1}
 \end{subfigure}
 \\
 \addtocounter{subfigure}{-3}
 \begin{subfigure}[t]{0.45\textwidth}
  \includegraphics[width=\textwidth]{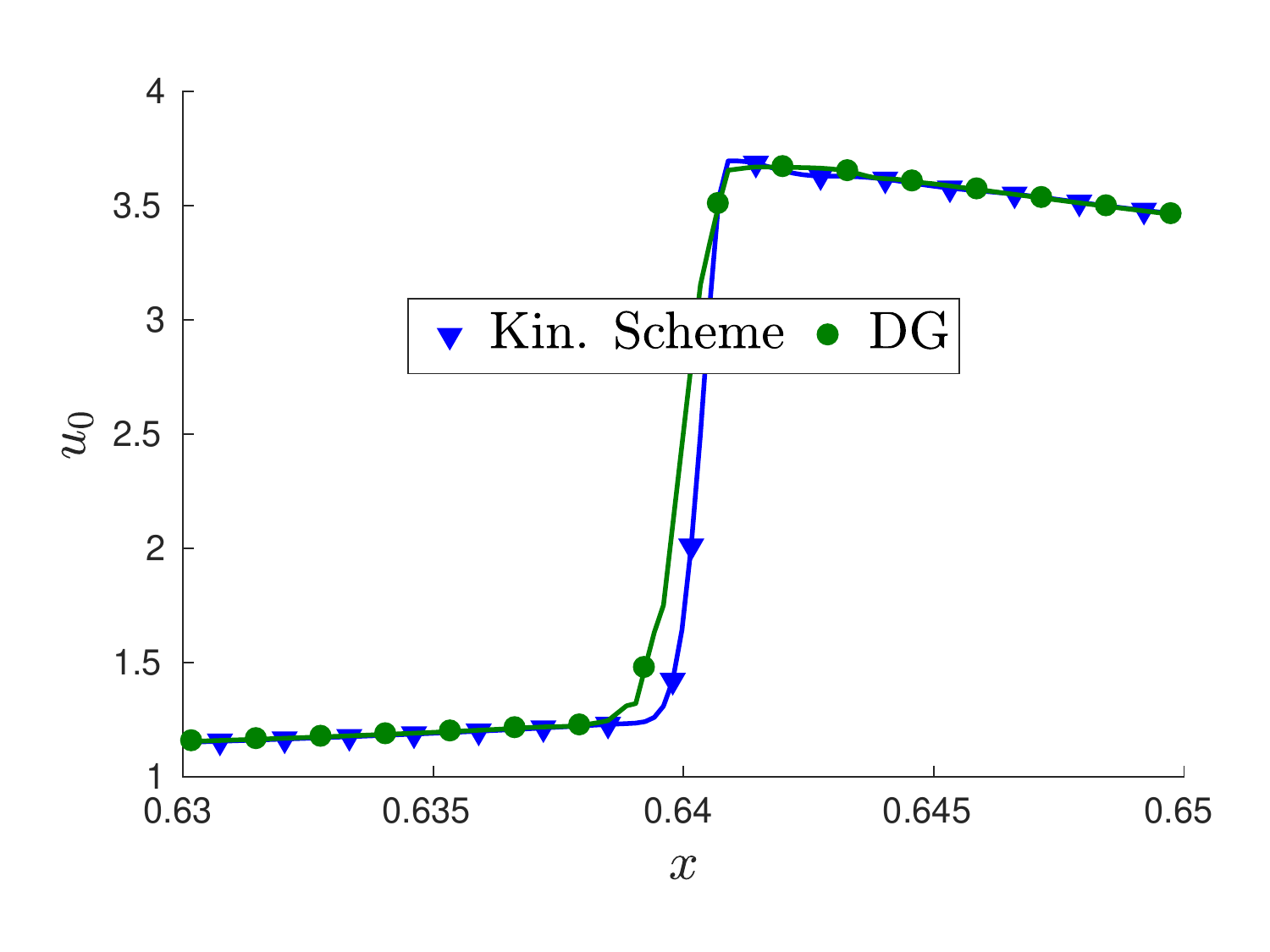}
  \caption{Zoom-in around $x \in [0.63, 0.65]$ of the comparison of the 
   solutions from the old kinetic scheme and the new regularized equations.}
  \label{fig:plane-kin-comp-zoom-2}
 \end{subfigure}
 \quad
 \addtocounter{subfigure}{2}
 \begin{subfigure}[t]{0.45\textwidth}
  \includegraphics[width=\textwidth]{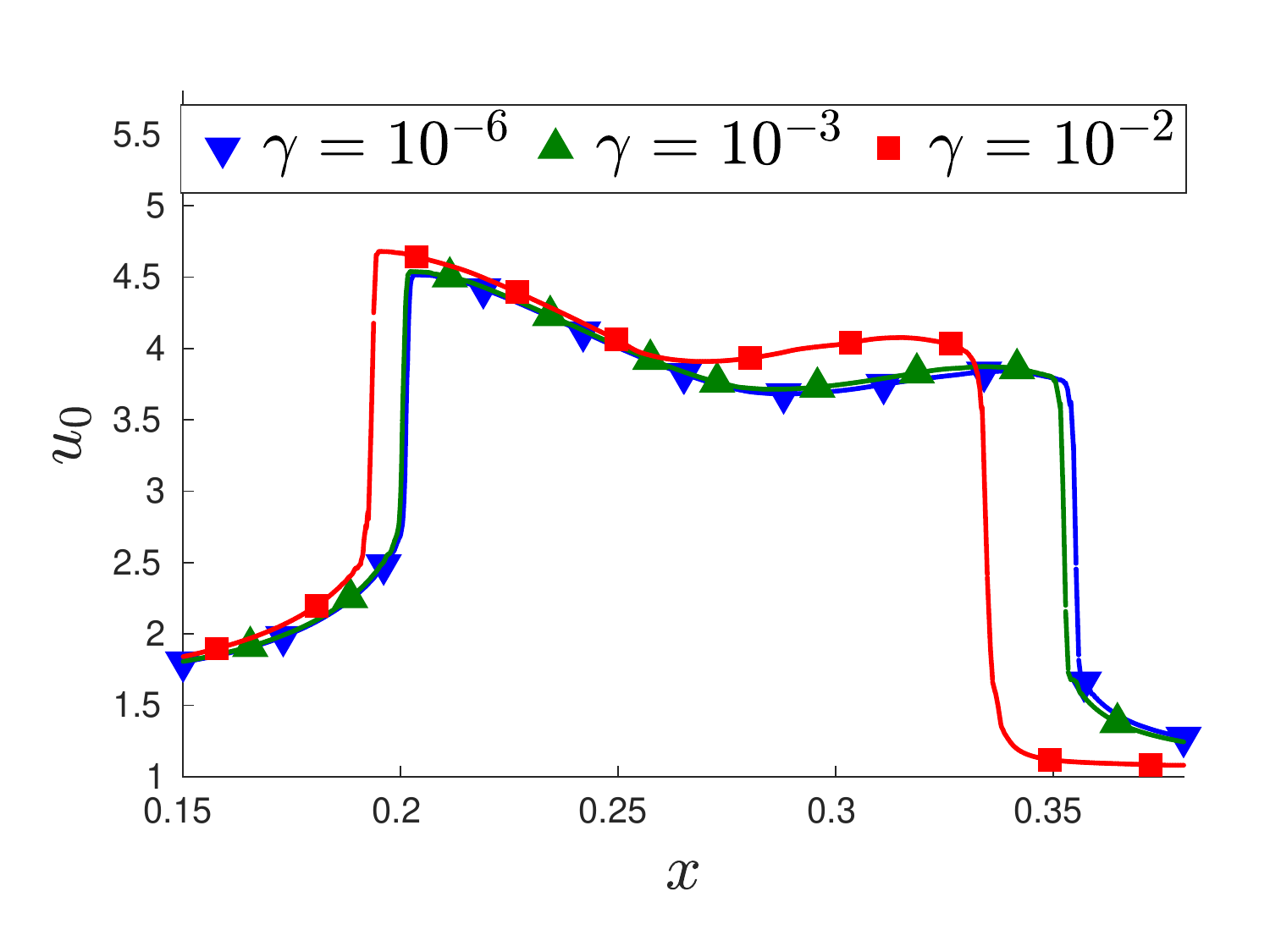}
  \caption{Zoom-in around $x \in [0.15, 0.4]$ of the comparison of the 
   solutions for different values of $\gamma$.}
  \label{fig:plane-gam-comp-zoom-2}
 \end{subfigure}
 \caption{Numerical solutions of the plane-source problem.}
 \label{fig:plane-all}
\end{figure}

\section{Concluding remarks}

In this work we introduce a new moment method for kinetic equations.
We derive this method, dubbed the regularized entropy-based moment 
method, by starting with the original entropy-based moment 
equations and relaxing the equality constraint in the optimization defining the 
ansatz reconstruction for the flux and collision terms.
By relaxing these constraints, we can define flux and collision terms 
for nonrealizable moment vectors which, while unphysical, often appear as a 
result of discretization error in numerical simulations.
The relaxation corresponds to a Tikhonov regularization in the defining 
optimization problem's dual.

We showed that the regularized system keeps many of the same properties as 
the original system:
Firstly, it dissipates entropy, albeit not the same as the original 
system but an approximation thereof, and is hyperbolic.
When the basis functions are orthonormal, the regularized system is also
rotationally symmetric.
On the other hand, translational invariance is lost.
The problem of degenerate densities for unbounded velocity domains also carries 
over to the regularized problem in the form of moment 
vectors for which the regularized problem has no solution.

We view these regularized equations as a tool to compute approximate solutions 
to the original entropy-based moment equations because the error in the 
regularized reconstruction can be controlled through the choice of the 
regularization parameter.
Numerical simulations using a discontinuous-Galerkin (DG) scheme confirm this 
accuracy for the moment equations from a one-dimensional linear kinetic 
equation.
We can use the DG scheme essentially off-the-shelf because relaxing the 
realizability requirement greatly simplifies its implementation.

For possible future work, a rigorous proof of the accuracy of the regularized 
equations would put the accuracy results on more solid ground.
In one spatial dimension (where well-posedness theory for hyperbolic systems is 
available), perhaps the best route to this result is by examining the difference 
in the Jacobians of the fluxes $\bff_\gamma$ and $\bff$ and applying the results 
of \cite{bianchini2002stability}.

The scheme could be improved by using an adaptive choice of the regularization 
parameter.
Another improvement would be the development of an asymptotic-preserving scheme 
to handle stiff, collision-dominated kinetic regimes.
This has been long sought for entropy-based moment equations, and we believe 
this will be more easily attainable without the obstacle of realizability.

\appendix

\section{Degenerate densities}
\label{sec:junk}

We recall that in \propref{junk-line-g} we are considering the 
Maxwell--Boltzmann entropy and $V = \R^d$.
We let $\bm$ contain polynomials up to degree $N$, for some even $N$, such that 
the only component of degree greater than or equal to $N$ is
$m_{n - 1}(v) = |v|^N$.
Let $\cA$ be the set of multiplier vectors such that $\Ga \in L^1(V)$.
In particular, we know that
\begin{align}
 \cA \subset \{ \bsalpha \in \R^n : \alpha_{n - 1} \le 0 \}
  \qquand
 \cA \cap \partial \cA \subset \{ \bsalpha \in \R^n : \alpha_{n - 1} = 0 \}.
\end{align}

In order to prove \propref{junk-line-g}, we need the following lemma.

\begin{lemma}\label{lem:min-vhatg}
The function $\cH_\gamma(\cdot ; \bv)$ achieves a unique minimum if and only if
$\bv \in \vhatgA$.
The minimizer, if it exists, has the form 
$\Ga$, defined in \eqref{eq:ansatz}.
\end{lemma}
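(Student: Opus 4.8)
The plan is to exploit the strict convexity of $\cH_\gamma(\cdot;\bv)$ on the convex set $\bbF(V)$, which is immediate from the strict convexity of $\eta$ and the convexity of $\bw \mapsto \|\bw - \bv\|^2$: this already gives uniqueness of the minimizer whenever one exists, so all that remains is to show that a minimizer exists precisely when $\bv \in \vhatgA$ and that it then has the Gibbs form $\Ga$. I would prove the two implications separately: the ``if'' direction by a direct convexity estimate, and the ``only if'' direction by a first-variation (Euler--Lagrange) argument, which is where the real work lies.

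For the ``if'' direction, suppose $\bv = \vhatg(\alphabar)$ with $\alphabar \in \cA$, so that $G_{\alphabar} = \etad'(\alphabar\cdot\bm) \in \bbF(V)$ and, by the definition \eqref{eq:vhatg} of $\vhatg$, $\bv = \Vint{\bm G_{\alphabar}} + \gamma\alphabar$. Since $G_{\alphabar}$ takes values in the interior of $\dom(\eta)$ and $\eta'(G_{\alphabar}) = \alphabar\cdot\bm$ by Legendre duality, the subgradient inequality for the convex function $\eta$ at $G_{\alphabar}$ gives the pointwise bound $\eta(g(v)) \ge \eta(G_{\alphabar}(v)) + (\alphabar\cdot\bm(v))(g(v) - G_{\alphabar}(v))$ for every $g \in \bbF(V)$. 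We may assume $\cH_\gamma(g;\bv) < \infty$, so that $\bm g \in L^1(V)$ and integrating this bound in $v$ is legitimate. Substituting the result into $\cH_\gamma(g;\bv)$ and expanding $\|\Vint{\bm g} - \bv\|^2 = \|(\Vint{\bm g} - \Vint{\bm G_{\alphabar}}) - \gamma\alphabar\|^2$, the linear cross terms $\pm\,\alphabar\cdot(\Vint{\bm g} - \Vint{\bm G_{\alphabar}})$ cancel and one is left with
\begin{equation*}
 \cH_\gamma(g;\bv) \ge \cH_\gamma(G_{\alphabar};\bv) + \frac1{2\gamma}\|\Vint{\bm g} - \Vint{\bm G_{\alphabar}}\|^2 \ge \cH_\gamma(G_{\alphabar};\bv),
\end{equation*}
with equality only if $g = G_{\alphabar}$ almost everywhere, by strict convexity of $\eta$. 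Hence $G_{\alphabar}$ is the unique minimizer, which also proves the final sentence of the lemma.

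For the ``only if'' direction, suppose the infimum is attained at some $g^* \in \bbF(V)$. One first checks that the problem is proper. It is bounded below: the elementary inequality $z\log z \ge z\log\phi + z - \phi$ (valid for $z \ge 0$, $\phi > 0$) with $\phi(v) = e^{-|v|^N}$ gives $\eta(g) \ge -|v|^N g - e^{-|v|^N}$, so whenever $\cH_\gamma(g;\bv) < \infty$ — and hence $\bm g \in L^1(V)$, recall $m_{n-1} = |v|^N$ — the objective is bounded below by a quadratic in the scalar $\Vint{m_{n-1}g}$; moreover any $\bsalpha$ with $\alpha_{n-1} < 0$ yields $\Ga \in \bbF(V)$ with super-polynomial decay and hence finite $\cH_\gamma(\Ga;\bv)$. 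Thus the minimizer $g^*$ has $\Vint{\bm g^*}$ and $\Vint{\eta(g^*)}$ finite; set $\bsalpha := \gamma^{-1}(\bv - \Vint{\bm g^*})$. A first-variation argument along the segments $(1-t)g^* + tg$, $t \in (0,1]$, $g \in \bbF(V)$ — using the monotonicity of the difference quotients of the convex functionals involved to pass the one-sided derivative through the integral — yields the variational inequality $\Vint{(\eta'(g^*) - \bsalpha\cdot\bm)(g - g^*)} \ge 0$ for all $g \in \bbF(V)$. Any set on which $g^*$ vanishes must be null, since otherwise perturbing $g^*$ upward there would make this directional derivative equal to $-\infty$ (as $\eta'(0^+) = -\infty$), contradicting optimality; on the complement, localizing the inequality forces $\eta'(g^*) = \bsalpha\cdot\bm$ almost everywhere, i.e.\ $g^* = \etad'(\bsalpha\cdot\bm) = \Ga$. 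Since $g^* \in L^1(V)$ we conclude $\bsalpha \in \cA$, and the definition of $\bsalpha$ rearranges to $\bv = \Vint{\bm\Ga} + \gamma\bsalpha = \vhatg(\bsalpha) \in \vhatgA$.

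The main obstacle is the first-variation step of the ``only if'' direction: when $V = \R^d$ is unbounded and $\eta'$ is singular at the boundary of $\dom(\eta)$, one must justify carefully both passing the derivative under the integral along admissible perturbations (via monotone/dominated convergence for the convex integrand) and the exclusion of a positive-measure zero set of the minimizer. This is precisely the analytic heart of the classical existence-and-characterization theory for entropy minimization, so rather than reprove it I would cite the arguments of \cite{Borwein-Lewis-1991} and \cite{Jun00}; the only modification here is the harmless addition of the smooth term $\gamma^{-1}(\Vint{\bm g} - \bv)\cdot\bm(v)$ from the quadratic penalty, which does not affect the singular behavior. Those references also supply the strong-duality identity $\hgv = \max_{\bsalpha}\{\bsalpha\cdot\bv - \hgd(\bsalpha)\}$, which gives an alternative route to the ``if'' direction via complementary slackness should the direct estimate above need reinforcement.
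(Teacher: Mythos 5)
Your proof is correct, and your ``if'' direction is essentially identical to the paper's: the same subgradient inequality $\eta(g) \ge \eta(\Ga) + (\bsalpha\cdot\bm)(g-\Ga)$ combined with the expansion of the quadratic penalty around $\Vint{\bm\Ga}$, with the cross terms cancelling to leave $\cH_\gamma(g;\bv) \ge \cH_\gamma(\Ga;\bv) + \frac{1}{2\gamma}\|\Vint{\bm g}-\Vint{\bm\Ga}\|^2$ (you add the explicit strict-convexity remark for uniqueness, which the paper leaves implicit). Where you genuinely diverge is the ``only if'' direction. The paper uses a reduction: since the penalty term is constant on the level set $\{g : \Vint{\bm g} = \Vint{\bm g^*}\}$, the minimizer $g^*$ of $\cH_\gamma(\cdot;\bv)$ also solves the \emph{unregularized} constrained problem \eqref{eq:primal} with moment vector $\Vint{\bm g^*}$, so the form $g^* = \Ga$ follows from a single citation of the known characterization theorem (\cite[Theorem 9]{Hauck-Levermore-Tits-2008}); only afterwards does it run a Gateaux-derivative computation against smooth compactly supported test functions --- applied to a density already known to be of exponential form, so integrability is unproblematic --- to pin down $\bv = \vhatga$. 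You instead attack the penalized functional directly: you define $\bsalpha := \gamma^{-1}(\bv - \Vint{\bm g^*})$ up front and derive the Euler--Lagrange identity $\eta'(g^*) = \bsalpha\cdot\bm$ by first variation at a \emph{general} minimizer, which forces you to confront exactly the delicate issues (differentiating under the integral, excluding a positive-measure zero set of $g^*$ via the singularity of $\eta'$ at $0$) that the paper's reduction is designed to sidestep; you resolve them by deferring to \cite{Borwein-Lewis-1991,Jun00}, which is legitimate but makes your argument lean on the references for its analytic core in the same way the paper leans on \cite{Hauck-Levermore-Tits-2008}. The trade-off: your route makes the multiplier and the identity $\bv = \Vint{\bm\Ga} + \gamma\bsalpha$ appear transparently from the optimality condition, while the paper's route is shorter and cleanly quarantines all the hard measure-theoretic work inside a prior theorem about the unregularized problem.
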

 
\begin{proof}
First assume $\bv = \vhatga$ for some $\bsalpha \in \cA$.
Since $\eta$ is convex, 
\begin{align}
\label{eq:convex-expansion-Ga}
 \Vint{\eta(g)} \ge \Vint{\eta(\Ga)}
  + \Vint{\bsalpha \cdot \bm (g - \Ga)},
\end{align}
where we use the fact that $\eta'(\Ga) = \bsalpha \cdot \bm$.
Applying \eqref{eq:convex-expansion-Ga} to the definition of $\cH_\gamma$
in \eqref{eq:H-gamma} and using the fact that $\bv = \Vint{\bm \Ga} + \gamma \bsalpha$ gives
\begin{subequations}
\begin{align}
 \cH_\gamma(g; \bv) &\ge \cH_\gamma(\Ga; \bv)
   + \frac1{2\gamma} \| \vint{\bm g} - \bv \|^2
   - \frac1{2\gamma} \| \vint{\bm \Ga} - \bv \|^2 \nonumber \\
  &\qquad + \Vint{\bsalpha \cdot \bm (g - \Ga)} \\
  &= \cH_\gamma(\Ga; \bv)
   + \frac1{2\gamma} \| \vint{\bm g} - \Vint{\bm \Ga}
    - \gamma \bsalpha \|^2
    - \frac\gamma 2 \| \bsalpha \|^2 \nonumber \\
  &\qquad + \Vint{\bsalpha \cdot \bm (g - \Ga)} \\
  &= \cH_\gamma(\Ga; \bv)
   + \frac1{2\gamma} \| \vint{\bm g} - \Vint{\bm \Ga}\|^2 \\
  &\ge \cH_\gamma(\Ga; \bv).
\end{align}
\end{subequations}
Thus $\Ga$ minimizes $\cH_\gamma(\cdot ; \bv)$.

On the other hand, assume $\cH_\gamma(\cdot ; \bv)$ has a minimizer, which 
we denote by $g^*$.
The minimizer $g^*$ also solves the problem
\begin{align}
\label{eq:g-star-solves}
 \minimize_{g \in \bbF(V)} \: \cH_\gamma(g; \bv)
  \qquad \st \: \Vint{\bm g} = \Vint{\bm g^*}.
\end{align}
Moreover, because the penalty term in $\cH_\gamma$ is constant on the 
constraint set in \eqref{eq:g-star-solves}, $g^*$ solves the original problem 
\eqref{eq:primal} with $\bv$ replaced by $\vint{\bm g^*}$.
Thus according to \cite[Theorem 9]{Hauck-Levermore-Tits-2008}, $g^* = \Ga$ for 
some $\bsalpha \in \cA$.

Now let $\widetilde g \in L^1(V)$ be smooth and have compact support.
We consider $\veps$ small enough such that
$\Ga + \veps \widetilde g \in \bbF(V)$.
Since $\Ga$ minimizes $\cH_\gamma(\cdot ; \bv)$, the Gateaux derivative of 
$\cH_\gamma$ in the direction $\widetilde g$ must be zero, i.e.,
\begin{subequations}
\begin{align}
 0 &= \lim_{\veps \to 0} \frac d {d\veps}
   \cH_\gamma(\Ga + \veps \widetilde g; \bv) \\
  &= \Vint{\eta'(\Ga) \widetilde g} + \frac1\gamma (\Vint{\bm \Ga} - \bv)
   \cdot \Vint{\bm \widetilde g} \\
  &= \left( \bsalpha + \frac1\gamma (\Vint{\bm \Ga} - \bv) \right)
   \cdot \Vint{\bm \widetilde g},
\end{align}
\end{subequations}
from which we can conclude, using the freedom in the choice of $\widetilde g$, 
that $\bv = \vhatga$.
\end{proof}

\begin{remark}
Like the original problem, when $V$ is compact there are no degenerate 
densities.
In this case, $\cA = \R^n$, and since the dual of the regularized problem is 
strongly convex, it has a maximizer for any $\bv \in \R^n$, and therefore 
$\vhatA = \R^n$.
\end{remark}

\begin{proof}[Proof of \propref{junk-line-g}]
Let 
\begin{align}\label{eq:dual-psi}
 \psi_\gamma(\bsalpha; \bv) := \bsalpha \cdot \bv
  - \Vint{\etad(\bsalpha \cdot \bm)}
  - \frac\gamma 2 \|\bsalpha\|^2
\end{align}
be the dual function for the regularized problem so that
\begin{align}
 \alphahatgv = \argmax_{\bsalpha \in \cA} \psi_\gamma(\bsalpha; \bv).
\end{align}
One can extend the arguments from \cite{Hauck-Levermore-Tits-2008} to show that 
$\alphahatgv$ exists for \emph{any} $\bv \in \R^n$ (although when
$\alphahatgv$ is on the boundary of $\cA$, it may not satisfy the first-order 
necessary conditions, i.e., sometimes $\vhatg(\alphahatgv) \ne \bv$).

Suppose that \eqref{eq:tik-primal} has a minimizer $g^*$.
Then by \lemref{min-vhatg}, $g^* = G_{\bsalpha^*}$ with
$\bv = \vhatg(\bsalpha^*)$.
Since the latter shows that $\bsalpha^*$ satisfies the first-order necessary 
conditions for \eqref{eq:dual-psi} and $\psi_\gamma(\cdot ; \bv)$ is strictly 
concave, we have $\bsalpha^* = \alphahatgv$.
By rearranging terms in the first-order necessary conditions, we conclude
${\bv - \gamma \alphahatgv \in \vhatA}$.

The contrapositive is: if $\bv - \gamma \alphahatgv \nin \vhatA$, then no 
minimizer exists.
Thus our strategy is to show that when $\bv$ has the form from 
\eqref{eq:junk-form-g}, we have ${\bv - \gamma \alphahatgv \nin \vhatA}$.

First, note that when $\bv$ has the form from \eqref{eq:junk-form-g}, then the
$\alphabar$ must be $\alphahatgv$.
To see this, recognize that $\alphabar \in \cA \cap \partial \cA$ implies 
that $\overline{\alpha}_{n - 1} = 0$, so that the concavity of $\psi_\gamma$ 
(and the requisite smoothness properties assured by \cite[Lemma 5.2]{Jun00}) 
gives
\begin{align}
 \psi_\gamma(\bsalpha; \bv) \le \psi_\gamma(\alphabar; \bv)
   + \psi'_\gamma(\alphabar; \bv) \cdot (\bsalpha - \alphabar)
  \stackrel{\eqref{eq:junk-form-g}}{=} \psi_\gamma(\alphabar; \bv)
   + \delta \alpha_{n - 1} 
  \le
   \psi_\gamma(\alphabar; \bv).
\end{align}
Since the maximizer of $\psi_\gamma(\cdot ; \bv)$ is unique,
$\alphabar = \alphahatgv$.
Thus \eqref{eq:junk-form-g} can be written as
\begin{align}
 \bv - \gamma \alphahatgv = \vhat(\alphahatgv)
  + \begin{pmatrix} 0 \\ \vdots \\ 0 \\ \delta \end{pmatrix},
\end{align}
and by \cite{Jun00}, the right-hand side is not in $\vhatA$.
\end{proof}

With a little more work, one can show that all degenerate densities
${\bv \in \R^n \setminus \vhatgA}$ have the form \eqref{eq:junk-form-g}.
Furthermore, for the case of more general polynomial basis functions, one 
can extend the arguments of \cite{Hauck-Levermore-Tits-2008} to show that the 
regularized problem satisfies the analogous complimentary-slackness condition
\begin{align}\label{eq:comp-slack}
 \alphahatgv \cdot (\bv - \vhatg(\alphahatgv)) = 0.
\end{align}
This complementary-slackness condition is the key to characterizing the set
degenerate densities (of the original problem) in
\cite{Hauck-Levermore-Tits-2008}.
Indeed, \eqref{eq:comp-slack} can be used to show that the set of all degenerate 
densities for the regularized problem is a union of normal cones which has the 
same form as \cite[Eq. (180)]{Hauck-Levermore-Tits-2008} with $\vhat$ (in that 
paper's notation, $\br$) replaced by $\vhatg$.

\section*{Acknowledgements}

The authors would like to thank Prof.\ Benjamin Stamm for helpful discussions 
which led to the definition and analysis of the modified flux function 
$\bff_\gamma$.

Graham Alldredge's work was funded by the Deutsche Forschungsgemeinschaft, 
project ID AL 2030/1-1.
He would also like to warmly thank Prof.\ Ralf Kornhuber and his group at the 
Freie Universit\"at Berlin for kindly hosting his stay in Berlin.

This material is based, in part, upon work supported by the U.S. Department of 
Energy, Office of Science, Office of Advanced Scientific Computing and performed 
at Oak Ridge National Laboratory (ORNL), managed by UT-Battelle, LLC for the
U.S. Department of Energy under Contract No. De-AC05-00OR22725.

\bibliographystyle{plain}
\bibliography{mnrefs}

\end{document}